%
%
%
%
%
%

%
%
%
\pdfpagewidth=8.5truein
\pdfpageheight=11truein
%

\documentclass[11pt]{amsart}
\usepackage{geometry}
\usepackage{amssymb, amsmath, amsthm, color, tikz,enumerate,mathrsfs}
\usepackage{pdfpages}
\usepackage{hyperref}
\usepackage{textcomp}
\usepackage{color}
\usepackage{xcolor}
\usepackage{listings}
\usepackage{todonotes}
\usepackage[style=alphabetic, sorting=nty]{biblatex}
\addbibresource{2-structures.bib}

\geometry{letterpaper}

\usepackage{fullpage}

\newtheorem{lemma}{Lemma}[section]
\newtheorem{proposition}[lemma]{Proposition}
\newtheorem{theorem}[lemma]{Theorem}
\newtheorem{corollary}[lemma]{Corollary}

\theoremstyle{definition}
\newtheorem{definition}[lemma]{Definition}
\newtheorem{remark}[lemma]{Remark}

\makeatletter
\newcommand{\setword}[2]{%
  \phantomsection
  #1\def\@currentlabel{\unexpanded{#1}}\label{#2}%
}
\makeatother

\numberwithin{equation}{section}

\hyphenation{Ehrenborg}
\hyphenation{arrange-ments}

\newcommand{\Rrr}{\mathbb{R}}

\newcommand{\Zzz}{\mathbb{Z}}
\newcommand{\R}{\Rrr}

\newcommand{\Z}{\Zzz}

\newcommand{\Bf}{\mathcal{B}}
\newcommand{\Cf}{\mathcal{C}}
\newcommand{\Hf}{\mathcal{H}}
\newcommand{\Tc}{\mathscr{T}}
\newcommand{\Tt}{\mathcal{T}}
\newcommand{\Zf}{\mathcal{Z}}
\newcommand{\Pf}{\mathcal{P}}
\newcommand{\Zfe}{\mathcal{Z_{\mathrm{ext}}}}
\newcommand{\Zfep}{\mathcal{Z_{\mathrm{ext}}'}}
\newcommand{\Pfe}{\mathcal{P_{\mathrm{ext}}}}
\newcommand{\fl}{\longrightarrow}

\DeclareMathOperator{\id}{id}
\DeclareMathOperator{\Span}{Span}
\DeclareMathOperator{\Vol}{Vol}
\DeclareMathOperator{\Ker}{Ker}

\newcommand\vanish[1]{}

\makeatletter
\@namedef{subjclassname@2020}{%
  \textup{2020} Mathematics Subject Classification}
\makeatother

\begin{document}

\title{Pizza and $2$-structures}

\author{Richard EHRENBORG, Sophie MOREL and Margaret READDY}

\address{Department of Mathematics, University of Kentucky, Lexington,
  KY 40506-0027, USA.\hfill\break \tt http://www.math.uky.edu/\~{}jrge/,
  richard.ehrenborg@uky.edu.}

\address{ENS de Lyon,
Unit\'e De Math\'ematiques Pures Et Appliqu\'ees,
69342 Lyon Cedex 07,
France.\hfill\break
\tt http://perso.ens-lyon.fr/sophie.morel/,
sophie.morel@ens-lyon.fr.}

\address{Department of Mathematics, University of Kentucky, Lexington,
  KY 40506-0027, USA.\hfill\break
\tt http://www.math.uky.edu/\~{}readdy/,
margaret.readdy@uky.edu.}

\subjclass[2020]
{Primary
52B45, 
20F55, 
51F15; 
Secondary
51M20, 
51M25.} 

\keywords{
Coxeter arrangements,
$2$-structures,
dissections,
Pizza Theorem,
reflection groups,
intrinsic volumes,
pseudo-root systems,
Bolyai-Gerwien Theorem}

\date{\today.}

\begin{abstract}
Let $\mathcal{H}$ be a Coxeter hyperplane arrangement 
in $n$-dimensional Euclidean space.
Assume that the negative of the identity map belongs to the associated Coxeter group~$W$.
Furthermore assume that the arrangement is not of type $A_1^n$.
Let $K$ be a measurable subset of the Euclidean space with finite volume
which is stable by the Coxeter group~$W$ and let $a$ be a point 
such that $K$ contains the convex hull of the orbit of the point~$a$ under the group~$W$.
In a previous article the authors proved the
generalized pizza theorem:
that the alternating sum over the chambers~$T$ of~$\mathcal{H}$ of
the volumes of the intersections $T\cap(K+a)$ is zero.
In this paper we give a dissection proof of this result.
In fact, we lift the identity to an abstract dissection
group to obtain a similar identity that replaces the volume by any
valuation that is invariant under affine isometries.
This includes the cases of all intrinsic volumes.
Apart from basic geometry, the main ingredient is 
a theorem of the authors 
where we relate the alternating sum
of the values of certain valuations over the chambers of a Coxeter arrangement
to similar alternating sums for simpler subarrangements called
$2$-structures
introduced by Herb to study discrete series characters of real
reduced groups.
\end{abstract}

\maketitle

\section{Introduction}

The $2$-dimensional pizza theorem is the following result: Given a disc in the plane,
choose a point on this disc and cut
the disc by $2k$ equally spaced lines passing through the point,
where $k \geq 2$. 
The alternating sum of the areas of the resulting slices is then equal to zero.
This was first proved by Goldberg~\cite{Gol}.  Frederickson
gave a dissection proof~\cite{Fred} based on dissection proofs
of Carter-Wagon in the case $k=2$ (see~\cite{Carter_Wagon}) and
of Allen Schwenk (unpublished) in the cases $k=3,4$. Frederickson deduced
dissection proofs of a similar sharing result for the pizza crust and
of the so-called calzone theorem, which is the analogue of the
pizza theorem for a ball in $\R^3$ that is cut by one horizontal
plane and by $2k$ equally-spaced vertical planes all meeting
at one point in the ball.

To generalize the pizza problem, consider a finite central hyperplane
arrangement~$\Hf$ in~$\R^n$ and fix a base chamber of this arrangement.
Each chamber~$T$ has a sign $(-1)^{T}$ determined by the parity of the number of
hyperplanes separating it from the base chamber. If $K$ is a measurable
subset of~$\R^n$ of finite volume, what can we say about
the pizza quantity $\sum_T (-1)^T\Vol(T\cap K)$, 
where the sum runs over all the chambers $T$ of~$\Hf$?
The original pizza theorem is the case where $n=2$, $\Hf$ has the type of
the dihedral arrangement $I_2(2k)$ 
and $K$ is a disc containing the origin.
The calzone theorem is the case where $n=3$, $\Hf$ has the type of the product
arrangement $I_2(2k)\times A_1$ and $K$ is a ball containing the origin.

The following generalization of the pizza and calzone theorems was
proved in~\cite[Theorem~1.2]{EMR_pizza} by analytic means.
We recently learned that Brailov had proved independently
this result in the case of a ball for the type $B_{n}$ arrangement
using similar methods~\cite{Brailov}.
\begin{theorem}[Ehrenborg--Morel--Readdy]
Let $\Hf$ be a Coxeter arrangement with Coxeter group~$W$
that contains the negative of the identity map, denoted by $-\id$.
Assume that $\Hf$ is not of type $A_{1}^{n}$.
Let $K$ be a set of finite measure that is 
stable by the group~$W$.
Then for every point $a\in\R^n$ such that $K$ contains the convex hull of $\{w(a) : w \in W\}$, 
we have
\[\sum_T (-1)^T\Vol(T\cap(K+a))=0 .\]
\label{theorem_introduction_first}
\end{theorem}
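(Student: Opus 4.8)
The plan is to give a dissection proof, lifting everything to an abstract dissection group so that the conclusion holds for $\Vol$ and, simultaneously, for every valuation invariant under affine isometries (hence for all intrinsic volumes). Write $Z(K,a)=\sum_T(-1)^T\,[\,T\cap(K+a)\,]$ for the ``pizza quantity'' of $K$. A first round of reductions lets me assume that $K$ is compact (truncating $K$ to $K\cap B_R$ is harmless since $\Vol$ is inner regular and the truncations are again $W$-stable and contain $\mathrm{conv}(Wa)$ for $R$ large) and then polyconvex (approximating from inside by $W$-stable polyconvex sets containing $\mathrm{conv}(Wa)$), which is also the natural setting for intrinsic volumes. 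Let $\mathcal{D}$ be the abelian group generated by isometry classes $[A]$ of nonempty polyconvex sets, modulo the inclusion--exclusion relations $[A]+[B]=[A\cup B]+[A\cap B]$ and $[\varnothing]=0$; every affine-isometry-invariant valuation factors through $\mathcal{D}$, so it is enough to prove $Z(K,a)=0$ in $\mathcal{D}$. The group $\mathcal{D}$ also carries an exterior product induced by $([A],[B])\mapsto[A\times B]$, which I will use below.

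For fixed $K$ and $a$, the assignment $T\mapsto[\,T\cap(K+a)\,]$ is a valuation on the closed convex polyhedral cones $T$, with values in $\mathcal{D}$. I would feed this into the authors' earlier theorem, which, under the hypothesis $-\id\in W$, expresses the alternating sum of such a cone valuation over the chambers of a Coxeter arrangement as a $\Z$-linear combination, with the integer multiplicities $\im(\varphi)$, of the analogous alternating sums over the chambers of the subarrangements $\Hf_\varphi$ attached to the $2$-structures $\varphi$ of $(W,\Hf)$. Each $\Hf_\varphi$ is, after an orthogonal change of coordinates, a product $I_2(2m_1)\times\cdots\times I_2(2m_r)$ of dihedral arrangements, and because $\Hf$ is not of type $A_1^n$ one can arrange that the relevant $\varphi$'s have some $m_j\ge 2$. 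Thus the theorem reduces me to proving $Z(K,a)=0$ when $\Hf=I_2(2m_1)\times\cdots\times I_2(2m_r)$ with, say, $m_1\ge 2$, for $K$ stable under the full group $W$ and containing $\mathrm{conv}(Wa)$.

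For this product case I would dissect each slice $T\cap(K+a)$ (here $T=T_1\times\cdots\times T_r$ and $(-1)^T=\prod_j(-1)^{T_j}$) along the translated polytope $P+a$, where $P:=\mathrm{conv}(Wa)=P_1\times\cdots\times P_r$ is the product of the polygons $P_j=\mathrm{conv}(W_j a_j)$ with $W_j\cong I_2(2m_j)$, and is centrally symmetric because $-\id\in W$. The part of the pizza lying inside $P+a$ is the pizza quantity of $P$ itself, which via the exterior product on $\mathcal{D}$ factors as $\prod_j Z_{I_2(2m_j)}(P_j,a_j)$; the planar factor $Z_{I_2(2m_1)}(P_1,a_1)$ vanishes by the two-dimensional case of the theorem (the classical pizza theorem applied to the regular, possibly truncated, $2m_1$-gon $P_1$), so the entire inside contribution is $0$. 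The part outside $P+a$ is $(K\setminus P)+a$ intersected with the chambers; since $P$ contains a neighbourhood of the origin and contains the orbit $Wa$, the region $K\setminus P$ is ``shielded'' from the apex, and here I would exhibit an explicit dissection whose pieces are matched in sign-reversing pairs by reflections and rotations of $W$ (these become genuine congruences on the shielded region precisely because $K+wa$ and $K+a$ agree there), so that this contribution is $0$ as well. Combining the two, $Z(K,a)=0$.

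The main obstacle is this last step: constructing the dissection of the outside part and the planar dissection of the $2m_1$-gon, and verifying that the hypothesis $K\supseteq\mathrm{conv}(Wa)$ is exactly what forces the near-apex contributions to cancel in the alternating sum (the calzone and $A_1$ examples show that the identity is false without it). Secondary points requiring care are the precise structure of the arrangements $\Hf_\varphi$ together with the bookkeeping of the multiplicities $\im(\varphi)$, making the exterior product on $\mathcal{D}$ fully compatible with the dissections, and checking that the reductions to compact and then polyconvex $K$ are valid for every intrinsic volume and not only for $\Vol$.
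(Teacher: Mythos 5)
Your overall architecture (lift to a dissection group, apply the $2$-structure expansion, then handle the resulting product arrangements by explicit dissections) is the same as the paper's. But there is a genuine gap at the reduction step. A $2$-structure $\varphi$ is by definition a product of irreducible factors of types $A_1$, $B_2$ and $I_2(2^k)$, $k\geq 3$; it is \emph{not} true that one can ``arrange that the relevant $\varphi$'s have some $m_j\geq 2$'' when $\Hf$ is not of type $A_1^n$. Since $W$ acts transitively on $\Tt(\Phi)$, the isomorphism type of $\varphi$ is forced by $\Phi$, and there are non-$A_1^n$ arrangements with $-\id\in W$ all of whose $2$-structures are of type $A_1^n$: for example $D_4$ (take $\{\pm e_1\pm e_2,\pm e_3\pm e_4\}$; the four roots $e_1\pm e_2, e_3\pm e_4$ are pairwise orthogonal). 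In that situation every term $P(\Hf_\varphi,K+a)$ is individually \emph{nonzero}: it equals the class of a half-open rectangular parallelotope $\prod_{e\in\varphi\cap\Phi^+}(0,2(a,e)e]$, and your argument gives no mechanism for the alternating sum of these classes over $\varphi\in\Tt(\Phi)$ to cancel. The paper needs two further ingredients here: a refinement of the Bolyai--Gerwien theorem showing that classes of parallelotopes in the dissection group are determined by their intrinsic volumes (Theorem~\ref{thm_Bolyai-Gerwien_plus_epsilon}), together with the vanishing of the top-degree sum $\sum_\varphi\epsilon(\varphi)\prod_e 2(a,e)$, proved by observing that this is a degree-$n$ anti-invariant polynomial, hence divisible by all $|\Hf|>n$ linear forms of the arrangement, hence zero. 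This is in fact the only place the hypothesis ``$\Hf$ not of type $A_1^n$'' enters; your proposal uses that hypothesis only to (incorrectly) exclude the $A_1^n$ $2$-structure case.

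For the case where some dihedral factor is present, your plan (split along the translated orbit polytope, kill the inside contribution by a planar dissection of the $2m$-gon, kill the outside by sign-reversing isometries from the affine reflection group centered at $a$) matches the paper's Lemmas~\ref{lemma_Cartesian_product_with_A_1}--\ref{lemma_dissection_proof_dihedral}, although you defer exactly the steps that carry the content: the ``classical pizza theorem for the polygon $P_1$'' is not something you may quote (the classical statement is analytic and for discs); at the level of the dissection group it is precisely Frederickson's dissection of the orbit polygon, carried out with care for the lower-dimensional boundary pieces. Also note that the $A_1$ factors of a $2$-structure must be treated separately (they produce the half-open segments $(0,2(a,e)e]$ rather than cancelling), and that the paper's framework makes your preliminary reduction to compact polyconvex $K$ unnecessary: no measurability of $K$ is ever used.
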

The proof of this result uses an expression for $\sum_T(-1)^T\Vol(T\cap (K+a))$ 
as an alternating sum of pizza quantities
over subarrangements of $\Hf$ of the form
$\{e_1^\perp,\ldots,e_n^\perp\}$ with $(e_1,\ldots,e_n)$ an
orthonormal basis of $\R^n$, in other words, subarrangements
of type $A_1^n$.

In the paper~\cite{EMR}, we study a different sum
$\sum_T (-1)^T\nu(\overline{T})$, where $\nu$ is a valuation
defined on closed convex polyhedral cones of $\R^n$ that takes 
integer values.
Under the same condition that $\Hf$ is a Coxeter arrangement,
we rewrite this quantity as an alternating
sum of similar quantities for certain subarrangements of $\Hf$ that are products
of rank~$1$ and rank~$2$ arrangements~\cite[Theorem~3.2.1]{EMR},
and then deduce an expression for it. These subarrangements,
called {\em $2$-structures}, were introduced
by Herb~\cite{Herb-2S} to study characters of discrete
series of real reductive groups.
In fact, the identity of~\cite[Theorem~3.2.1]{EMR} is valid for
any valuation and its proof uses
only basic properties of Coxeter systems and
closed convex polyhedral cones. 

In this paper we use the setting of $2$-structures 
and~\cite[Corollary~3.2.4]{EMR} (recalled in
Theorem~\ref{thm_pizza_abstract})
to obtain a dissection
proof of the higher-dimensional
pizza theorem of~\cite[Theorem~1.2]{EMR_pizza}
that is independent of the results and methods of~\cite{EMR_pizza}:

\begin{theorem}[Abstract pizza theorem;
see Theorem~\ref{theorem_pizza_dissection}.]
With the notation and hypotheses of Theorem~\ref{theorem_introduction_first},
we have
\[\sum_{T}(-1)^T[\overline{T}\cap(K+a)] = 
\sum_{T}(-1)^T[T\cap(K+a)] =
0,\]
where the brackets denote classes in the abstract dissection
group of Definition~\ref{def_K(V)}.
\label{theorem_introduction_second}
\end{theorem}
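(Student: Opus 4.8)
The plan is to work in the abstract dissection group $\mathcal{K}(\R^n)$ and to reduce the full Coxeter statement to a statement about $2$-structures via the key identity of \cite[Corollary~3.2.4]{EMR} (our Theorem~\ref{thm_pizza_abstract}). First I would establish that the map $X \mapsto [X \cap (K+a)]$ extends to a well-defined valuation on the appropriate polyconvex subsets, so that the two alternating sums in the statement are meaningful and so that the closed-chamber version and the open-chamber version differ by a sum of terms supported on lower-dimensional pieces (intersections $\overline{T}\cap\overline{T'}\cap(K+a)$), which pair up with opposite signs across each hyperplane and hence cancel; this reduces the problem to proving one of the two equalities. Next, I would invoke Theorem~\ref{thm_pizza_abstract} to rewrite $\sum_T (-1)^T[\overline{T}\cap(K+a)]$ as an alternating sum, indexed by the $2$-structures $\phi$ of the Coxeter system, of analogous pizza quantities $\sum_{C}(-1)^C[\overline{C}\cap(K+a)]$ taken over the chambers $C$ of the subarrangement $\Hf_\phi$ attached to $\phi$. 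Because each $\Hf_\phi$ is (up to the action of $W$) a product of rank-$1$ and rank-$2$ Coxeter arrangements, the problem is reduced to the pizza identity for such product arrangements.

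The heart of the argument is then the dissection proof in the rank-$1$ and rank-$2$ cases. For a rank-$2$ factor of type $I_2(2k)$ with $k \ge 2$, acting on a $W$-stable set $K$ containing the convex hull of the orbit of the translation point, I would give the explicit dissection: the $4k$ slices $\overline{T}\cap(K+a)$ come in two sign classes, and one exhibits a measure-preserving piecewise-isometric bijection between the union of the positive slices and the union of the negative slices. This is where the hypothesis that $-\id \in W$ (equivalently, that $2k$ is even, so that antipodal chambers have opposite sign) and the hypothesis that $K \supseteq \mathrm{conv}\{w(a)\}$ enter: the first guarantees that the central symmetry $x \mapsto 2a - x$ (or rather its conjugates by the dihedral rotations) maps positive slices to negative slices, and the second guarantees that these symmetries actually land inside $K+a$ so that the pieces being matched are genuinely contained in the sets appearing in the sum. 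I would assemble the global matching from the local dihedral one across the product decomposition, using that classes in $\mathcal{K}(\R^n)$ are additive over such dissections and invariant under affine isometries; the rank-$1$ factors contribute a trivial (single-hyperplane) cancellation.

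Finally, I would lift the whole argument from $\Vol$ to the abstract dissection group: every step above is a manipulation of classes $[X]$ and every bijection used is piecewise an affine isometry, so the identity holds in $\mathcal{K}(\R^n)$ and therefore under any valuation invariant under affine isometries, in particular all intrinsic volumes. The main obstacle I anticipate is the bookkeeping in the rank-$2$ dissection: one must verify that the piecewise-isometric matching between positive and negative slices is compatible with the boundary pieces (so that the closed-chamber and open-chamber versions are handled uniformly) and that translating the arrangement by $a$ does not push any matched piece outside $K+a$ — this is precisely where the convex-hull hypothesis does real work, and getting the geometry of the overlap region $(K+a) \cap (K + w(a))$ right for each dihedral element $w$ is the technical crux. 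Once the rank-$\le 2$ case is settled, the passage to general $W$ is formal given Theorem~\ref{thm_pizza_abstract}.
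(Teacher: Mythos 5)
Your overall architecture matches the paper's: reduce via Theorem~\ref{thm_pizza_abstract} to $2$-structures (products of $A_1$ and $I_2(2^k)$ factors), kill the dihedral factors by an explicit Frederickson-style dissection in the plane, and propagate everything through the product decomposition inside the dissection group. The dihedral step you describe is essentially Lemma~\ref{lemma_dissection_proof_dihedral}, although the matching there is not a single central symmetry but a collection of reflections and rotations in the affine group $W_a$ applied to carefully chosen regions $R_{i,\pm}$, $S_{j,\pm}$ and rays, plus a separate dissection of the central polygon $R_0(a)$; the convex-hull hypothesis enters exactly where you say it does.

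There is, however, a genuine gap: your claim that ``the rank-$1$ factors contribute a trivial (single-hyperplane) cancellation'' is false, and this is precisely the hard case. For an $A_1$ factor the reflection in the translated hyperplane $H_{e}+a$ cancels everything except a half-open segment $(0,2(a,e)e]$, which is a \emph{nonzero} class in $K(V)$ whenever $(a,e)\neq 0$ (this is statement~(ii) of Theorem~\ref{theorem_pizza_dissection}). Since there are Coxeter arrangements not of type $A_1^n$ all of whose $2$-structures are of type $A_1^n$ (e.g.\ type $D_4$), your reduction leaves you with an alternating sum $\sum_{\varphi}\epsilon(\varphi)\bigl[\prod_{e\in\varphi\cap\Phi^+}(0,2(a,e)e]\bigr]$ of nonzero parallelotope classes, and no dissection has been exhibited between them. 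The paper closes this with two additional ingredients you do not have: a refinement of the Bolyai--Gerwien theorem (Theorem~\ref{thm_Bolyai-Gerwien_plus_epsilon}) showing that such classes are determined by their intrinsic volumes, and a polynomial argument showing that the alternating sum of the top-dimensional volumes is a degree-$n$ homogeneous polynomial in $a$ vanishing on all $|\Hf|>n$ hyperplanes, hence identically zero. Without some substitute for these steps your proof does not go through.
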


As we take into account lower-dimensional sets when defining our abstract
dissection group, this result implies
generalizations of the higher-dimensional pizza theorem to all the
intrinsic volumes when $K$ is convex.

The idea of the proof of Theorem~\ref{theorem_introduction_second}
is the following: by expanding the expression
using $2$-structures, we can
reduce to a sum where each term is a similar expression for
an arrangement that is a product of arrangements
of types~$A_1$ and~$I_2(2^k)$. We then adapt the dissection proof
of Frederickson to an arrangement
of type $I_2(2m)\times\Hf'$. We also explain how to keep track of 
lower-dimensional regions of the dissection.
If our product arrangement contains at least
one dihedral factor, then its contribution is zero, and we immediately
get a dissection proof of the result.
However, if all the product arrangements
that appear are of type $A_1^n$, then their individual contributions are not zero.
We need one extra step in the proof
to show that the contributions cancel.
This uses a slight
refinement of the Bolyai-Gerwien Theorem explained
in Section~\ref{section_Bolyai-Gerwien_plus_epsilon}.

An interesting point to note is that the shape of the pizza plays
absolutely no role in this proof, as long as it has 
the same symmetries as the arrangement and contains
the convex hull of $\{w(a) : w \in W\}$. In particular, we no longer
need to assume that it is measurable and of finite volume.

The plan of the paper is as follows. Section~\ref{section_review}
contains a review of $2$-structures and of the results 
from~\cite{EMR} that we will need. Section~\ref{section_dissection}
contains the statement and proof of the abstract pizza theorem
(Theorem~\ref{theorem_pizza_dissection}),
and Section~\ref{section_Bolyai-Gerwien_plus_epsilon}, as we already
mentioned, contains a Bolyai-Gerwien type result that is needed
in the proof of the abstract pizza theorem.

Let us mention some interesting questions that remain open:
\begin{itemize}
\item[(1)] The paper~\cite{EMR_pizza} proves the pizza
theorem for more general arrangements (the condition is that
the arrangement~$\Hf$ is a Coxeter arrangement
and that the number of hyperplanes is greater than the dimension $n$ and
has the same parity as that dimension), but only in the case of the ball;
see~\cite[Theorem~1.1]{EMR_pizza}. Is it possible to give a dissection
proof of this result?

\item[(2)] Mabry and Deiermann~\cite{MaDe} show that the
two-dimensional pizza theorem does not hold for a dihedral arrangement having
an odd number of lines. More precisely, they determine the sign
of the quantity $\sum_T(-1)^T\Vol(T\cap K)$, where $K$
is a disc containing the origin, and show that it vanishes if and only
if the center of $K$ lies on one of the lines. Their methods are analytic.
As far as we know, there exists no dissection proof of this result
either.
The higher-dimensional case where $\Hf$ is a Coxeter arrangement
and the number of its hyperplanes does not have the same parity as
$n$ also remains wide open.
\end{itemize}

\section{Review of $2$-structures and of the basic identity}
\label{section_review}

Let $V$ be a finite-dimensional real vector space with an inner product
$(\cdot,\cdot)$. 
For every $\alpha\in V$, we denote by $H_\alpha$ the hyperplane
$\alpha^\perp$ and by $s_\alpha$ the orthogonal reflection in 
the hyperplane $H_\alpha$.

We say that a subset $\Phi$ of $V$ is a
\emph{normalized pseudo-root system} if:
\begin{itemize}
\item[(a)] $\Phi$ is a finite set of unit vectors;
\item[(b)] for all $\alpha,\beta\in\Phi$, we have $s_\beta(\alpha)\in\Phi$
(in particular, taking $\alpha=\beta$, we get that $-\alpha\in\Phi$).
\end{itemize}
Elements of $\Phi$ are called \emph{pseudo-roots}.
The \emph{rank} of $\Phi$ is the dimension of its span.

We call such objects {\em pseudo-root systems} to distinguish them from
the (crystallographic) root systems that appear in representation theory.
If $\Phi'$ is a root system then the set 
$\Phi={\{\alpha/ {\|\alpha\|} : \alpha\in\Phi'\}}$ 
is a normalized pseudo-root system. Not
every normalized pseudo-root system arises in this manner;
see for instance the pseudo-root systems of type $H_{3}$ and $H_{4}$.

We say that a normalized pseudo-root system $\Phi$ is \emph{irreducible}
if, whenever $\Phi=\Phi_1\sqcup\Phi_2$ with $\Phi_1$ and 
$\Phi_2$ orthogonal,
we have either $\Phi_1=\varnothing$ or $\Phi_2=\varnothing$. Every normalized
pseudo-root system can be written uniquely as a disjoint union of
pairwise orthogonal
irreducible normalized pseudo-root systems. Irreducible
normalized pseudo-root systems are classified: they are in one
of the infinite families $A_n$ ($n\geq 1$), $B_n/C_n$ ($n\geq 2$),
\footnote{The pseudo-root systems of types $B_n$ and $C_n$ are identical
after normalizing the lengths of the roots.}
$D_n$ ($n\geq 4$), $I_2(m)$ ($m\geq 3$) or one of the exceptional
types $E_6$, $E_7$, $E_8$, $F_4$, $H_3$ or $H_4$, with
types $I_2(3)$ and $A_2$ isomorphic, as well as types
$I_2(4)$ and $B_2$.
(See
\cite[Chapter~5]{GB}
or Table~1 in~\cite[Appendix~A]{BB}.)

We say that a subset $\Phi^+\subset\Phi$ is a
\emph{positive system} if there exists a total ordering
$<$ on the $\R$-vector space $V$ such that $\Phi^+=\{\alpha\in\Phi :
\alpha>0\}$ (see~\cite[Section~1.3]{Hu-Cox}).
The \emph{Coxeter group} of $\Phi$ is the group of isometries $W$ of
$V$ generated by the reflections $s_\alpha$ for $\alpha\in\Phi$.
This group preserves~$\Phi$ by definition of a normalized pseudo-root
system, and it acts simply transitively on the set of positive systems
by~\cite[Section~1.4]{Hu-Cox}. In particular, the Coxeter group $W$ is finite.

Let $E$ be a finite set of unit vectors of $V$ such that $E\cap(-E)=
\varnothing$. The corresponding \emph{hyperplane arrangement} is
the set of hyperplanes $\Hf=\{H_e : e\in E\}$. 
A \emph{chamber} of $\Hf$ is
a connected component
of $V - \bigcup_{e\in E}H_e$;
we denote by $\Tc(\Hf)$ the set of chambers of $\Hf$.
Fix a chamber $T_0$ to be the {\em base chamber}.
For a chamber $T \in \Tc(\Hf)$
we denote by $S(T,T_0)$ the set of
$e \in E$ such that the two chambers $T$ and~$T_0$ are on different
sides of the hyperplane~$H_e$, and define the \emph{sign} of $T$
to be $(-1)^T=(-1)^{|S(T,T_0)|}$.

We say that
$\Hf$ is a \emph{Coxeter arrangement} if it is stable by 
the orthogonal reflections in
each of its hyperplanes. In that case, the set $\Phi=E\cup(-E)$ is a
normalized pseudo-root system. We call its Coxeter group the Coxeter group
of the arrangement. 
The map sending a positive
system $\Phi^+\subset\Phi$ to the set
$\{v\in V : \forall\alpha\in\Phi^+ \: (v,\alpha) > 0\}$
is a bijection from the positive systems in $\Phi$ to the chambers
of $\Hf$. See, for example,
\cite[Chapitre~V \S~4 \textnumero~8 Proposition~9 p.~99]{Bourbaki}
and the discussion following it.
Conversely, if $\Phi\subset V$ is a normalized pseudo-root
system with Coxeter group $W$
and $\Phi^+\subset\Phi$ is a positive system, then
$\Hf=\{H_\alpha : \alpha\in\Phi^+\}$ is a Coxeter hyperplane
arrangement, and in that case we always take the base chamber~$T_0$ to be
the chamber corresponding to $\Phi^+$.

We now define product arrangements. Let $V_1$ and $V_2$ be two
finite-dimensional real vector spaces equipped with inner products, and
suppose that we are given hyperplane arrangements $\Hf_1$ and
$\Hf_2$ on $V_1$ and $V_2$ respectively. We consider the product
space $V_1\times V_2$, where the factors are orthogonal. The {\em product
arrangement} $\Hf_1\times\Hf_2$ is then the arrangement on
$V_1\times V_2$ with hyperplanes $H\times V_2$ for $H\in\Hf_1$ and
$V_1\times H'$ for $H'\in\Hf_2$. 
If $\Hf_1$ is the empty arrangement, then we write
$V_1\times \Hf_2$  instead of the confusing $\varnothing \times\Hf_2$.
Similarly, if $\Hf_2$ is the empty arrangement, we write $\Hf_1\times V_2$.
If the arrangements
$\Hf_1$ and $\Hf_2$ arise from normalized pseudo-root systems
$\Phi_1\subset V_1$ and $\Phi_2\subset V_2$, then 
their product $\Hf_1\times\Hf_2$
arises from the normalized pseudo-root system
$\Phi_1\times\{0\}\cup\{0\}\times\Phi_2\subset V_1\times V_2$.
We also denote this pseudo-root system by $\Phi_1\times\Phi_2$.

The notion of $2$-structures was introduced by Herb
for root systems
to study the characters of discrete series representations;
see, for example, the review article~\cite{Herb-2S}. The definition we
give here is Definition~B.2.1 of~\cite{EMR}. It has been slightly
adapted to work for pseudo-root systems.

\begin{definition}
Let $\Phi$ be a normalized pseudo-root system with Coxeter group $W$.
A \emph{$2$-structure} for $\Phi$ is a subset~$\varphi$ of~$\Phi$
satisfying the following properties:
\begin{itemize}
\item[(a)] 
The subset $\varphi$ is a disjoint union
$\varphi=\varphi_1\sqcup\varphi_2\sqcup\cdots\sqcup\varphi_r$,
where the $\varphi_i$ are pairwise orthogonal subsets
of $\varphi$ and each of them is an irreducible pseudo-root system of type
$A_1$, $B_2$ or $I_2(2^k)$ for $k\geq 3$.
\item[(b)] Let $\varphi^+=\varphi\cap\Phi^+$. If $w\in W$ is such that
$w(\varphi^+)=\varphi^+$ then $\det(w) = 1$.
\end{itemize}
\label{def_2_structure}
We denote by $\Tt(\Phi)$ the set of $2$-structures for $\Phi$.
\end{definition}

\begin{proposition}
Let $\Phi$ be a normalized pseudo-root system with Coxeter group
$W$.
\begin{itemize}
\item[(i)] The group $W$ acts transitively on
the set of $2$-structures $\Tt(\Phi)$.

\item[(ii)] The pseudo-root system $\Phi$ and its $2$-structures
have the same rank if and only if there exists $w\in W$ whose
restriction to $\Span(\Phi)$ is equal to
$-\id_{\Span(\Phi)}$.

\end{itemize}
\label{proposition_2_structures}
\end{proposition}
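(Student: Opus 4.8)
The plan is to reduce both assertions to the case where $\Phi$ is irreducible. Write $\Phi=\Phi_1\sqcup\cdots\sqcup\Phi_m$ for the decomposition into pairwise orthogonal irreducible components, so that $W=W_1\times\cdots\times W_m$ with $W_i=W(\Phi_i)$, and fix a positive system, which decomposes as $\Phi^+=\Phi_1^+\sqcup\cdots\sqcup\Phi_m^+$. The key preliminary observation is that a subset $\varphi\subseteq\Phi$ is a $2$-structure for $\Phi$ if and only if each $\varphi\cap\Phi_i$ is a $2$-structure for $\Phi_i$: condition~(a) of Definition~\ref{def_2_structure} forces every irreducible block of $\varphi$ into a single $\Phi_i$ (two pseudo-roots in distinct components are orthogonal, which would split the block), and condition~(b) for $\Phi$ and for the $\Phi_i$ are equivalent because an element of $W$ fixing $\varphi^+$ is exactly a tuple of elements $w_i\in W_i$ fixing $(\varphi\cap\Phi_i)^+$, and its determinant on $\Span(\Phi)$ equals $\prod_i\det(w_i|_{\Span(\Phi_i)})$. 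Hence $\Tt(\Phi)=\prod_i\Tt(\Phi_i)$ as $\prod_i W_i$-sets, the rank of a $2$-structure is the sum of those of its components, and $W$ contains an element restricting to $-\id$ on $\Span(\Phi)$ if and only if each $W_i$ does on $\Span(\Phi_i)$. So from now on $\Phi$ may be assumed irreducible and nonempty.

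For part~(i) I would argue by inspection of the classification recalled above. If $\Phi$ has type $A_1$, $B_2=I_2(4)$, or $I_2(2^k)$ with $k\geq 3$, then $\Phi$ is itself a $2$-structure (condition~(b) holds because $W$ acts simply transitively on positive systems), and a short check shows it is the only one: a proper candidate would be a union of mutually orthogonal sub-pseudo-root systems of type $A_1$ or $B_2$ inside a dihedral system, and it always violates condition~(b), since $W$ then contains a reflection of determinant $-1$ preserving the associated positive system: for instance, when a block is an $A_1=\{\pm\alpha\}$, the reflection in the line $\R\alpha$, which lies in $W$ because the direction orthogonal to $\alpha$ is again a pseudo-root. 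For each of the remaining irreducible types I would invoke Herb's explicit description of the $2$-structures, adapted to pseudo-root systems in~\cite{EMR}: for the families $A_n$, $B_n$, $D_n$, $I_2(m)$ it is a combinatorial model from which transitivity of the $W$-action is visible, and for the exceptional types $E_6$, $E_7$, $E_8$, $F_4$, $H_3$, $H_4$ it is a finite verification. This also yields $\Tt(\Phi)\neq\varnothing$.

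For part~(ii), the implication from equal ranks to ${-\id}\in W$ is uniform and short. Let $\varphi=\varphi_1\sqcup\cdots\sqcup\varphi_r$ be a $2$-structure with $\Span(\varphi)=\Span(\Phi)$. Each $\varphi_i$ has type $A_1$, $B_2$, or $I_2(2^k)$, and for each of these the longest element of the associated Coxeter group equals $-\id$ on its span. Viewed inside $W$, this longest element $w_i\in W(\varphi_i)\subseteq W$ acts as $-\id$ on $\Span(\varphi_i)$ and as the identity on $\Span(\varphi_i)^\perp$, since every reflection $s_\alpha$ with $\alpha\in\varphi_i$ fixes $\Span(\varphi_i)^\perp$ pointwise. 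As the subspaces $\Span(\varphi_i)$ are pairwise orthogonal with sum $\Span(\varphi)=\Span(\Phi)$, the product $w_1w_2\cdots w_r\in W$ restricts to $-\id$ on $\Span(\Phi)$.

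For the converse, note that by part~(i) all $2$-structures are $W$-conjugate, hence of the same rank, so it is enough to produce one of full rank, and by the first paragraph we may take $\Phi$ irreducible. The irreducible types whose Coxeter group contains $-\id$ are exactly $A_1$, $B_n$ ($n\geq 2$), $D_n$ ($n$ even), $E_7$, $E_8$, $F_4$, $H_3$, $H_4$, and $I_2(m)$ with $m$ even; for each one exhibits a full-rank $2$-structure and checks condition~(b): for the classical and dihedral types one uses orthogonal blocks of type $B_2$ and $A_1\times A_1$ placed on pairs of coordinates (with a leftover $A_1$ when the rank is otherwise odd, and with $\Phi$ itself when $\Phi=I_2(2^k)$), and for $E_7$, $E_8$, $F_4$, $H_3$, $H_4$ one uses the explicit $2$-structures of~\cite{EMR} following Herb~\cite{Herb-2S}. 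I expect the main obstacle to be precisely this reliance on the classification---there seems to be neither a type-free construction of a full-rank $2$-structure nor a type-free proof of the transitivity in part~(i)---together with the verification of condition~(b) in the classical cases, where one must exploit specific features such as the two orbits of pseudo-roots inside a $B_2$-block and the absence of single sign changes in $W(D_n)$, which rigidify the stabilizer of $\varphi^+$ enough.
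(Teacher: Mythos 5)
Your proposal is correct and follows essentially the same route as the paper, whose own ``proof'' is simply a citation: part~(i) to the start of Section~4 of Herb's survey and Proposition~B.2.4 of \cite{EMR}, and part~(ii) to the strong-orthogonality criterion in \cite{Herb-DSC} together with the classification of $2$-structures in Section~B.4 of \cite{EMR}. Your reduction to irreducible components and the longest-element argument for the ``equal rank $\Rightarrow -\id\in W$'' direction are correct uniform supplements, but the core of both arguments is the same type-by-type classification that you (rightly) identify as unavoidable and defer to the same references.
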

\begin{proof}
\begin{itemize}
\item[(i)] See the start of Section~4 of~\cite{Herb-2S} and
Proposition~B.2.4 of~\cite{EMR}.

\item[(ii)] For $\Phi$ arising from a
root system $\Phi'$, these two conditions are equivalent
to the fact that $\Phi'$ is spanned by strongly orthogonal roots;
see, for example, the top of page 2559 of~\cite{Herb-DSC}. For
general pseudo-root systems, see the classification of $2$-structures
in Section~B.4 of~\cite{EMR}.
\qedhere
\end{itemize}
\end{proof}

To each $2$-structure $\varphi\subset\Phi$, we can associate a
sign $\epsilon(\varphi)=\epsilon(\varphi,\Phi^+)$
(see the start of Section~5 and Lemma~5.1 of~\cite{Herb-DSC}
and Definition~B.2.8 of~\cite{EMR}).

We next introduce the abstract pizza quantity.
Let $\Hf$ be a central hyperplane arrangement on~$V$.
Let $\Cf_\Hf(V)$ be the set of closed convex polyhedral cones in~$V$
that are intersections of closed half-spaces bounded by hyperplanes~$H$
where $H\in\Hf$,
and let $K_\Hf(V)$ be the quotient of the free abelian group 
$\bigoplus_{K\in \Cf_\Hf(V)}\Z[K]$
on $\Cf_\Hf(V)$ by the relations
$[K]+[K']=[K\cup K']+[K\cap K']$ for all
$K,K'\in \Cf_\Hf(V)$ such that $K\cup K'\in \Cf_\Hf(V)$.
For $K\in \Cf_\Hf(V)$, we still denote
the image of $K$ in $K_\Hf(V)$ by~$[K]$.
\begin{definition}
Suppose that we have fixed a base chamber of $\Hf$.
The \emph{abstract pizza quantity} of~$\Hf$ is
\[P(\Hf)=\sum_{T\in\Tc(\Hf)}(-1)^T[\overline{T}]\in K_\Hf(V).\]
\end{definition}
\begin{remark}
By Lemma~3.2.3 of~\cite{EMR}, we have
\[P(\Hf)=\sum_{T\in\Tc(\Hf)}(-1)^T[T].\]
We use this alternative definition of $P(\Hf)$ in our proofs.
\label{remark_open_chambers}
\end{remark}

The following result is Corollary~3.2.4 of~\cite{EMR}.
It shows how to evaluate the pizza quantity for a Coxeter arrangement
in terms of the associated $2$-structures.
\begin{theorem}
Let $\Phi\subset V$ be a normalized pseudo-root
system. Choose a positive system $\Phi^+\subset\Phi$
and let $\Hf$ be the hyperplane
arrangement $(H_\alpha)_{\alpha\in\Phi^+}$ on $V$ with base chamber
corresponding to $\Phi^+$.
For every $2$-structure $\varphi\in\Tt(\Phi)$, we 
write $\varphi^+=\varphi\cap\Phi^+$ and
we denote
by $\Hf_\varphi$ the hyperplane arrangement $(H_\alpha)_{\alpha\in\varphi^+}$
with base chamber corresponding to $\varphi^+$.
Then we have
\[P(\Hf)=\sum_{\varphi\in\Tt(\Phi)}\epsilon(\varphi)
P(\Hf_\varphi).\]

\label{thm_pizza_abstract}
\end{theorem}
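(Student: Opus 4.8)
The plan is to reduce everything to the main structural identity of~\cite{EMR}, namely Theorem~3.2.1 of that paper, which expresses an alternating sum over chambers of a Coxeter arrangement in terms of similar sums over the subarrangements attached to $2$-structures, and to check that the hypotheses of that theorem apply to the particular valuation that computes the abstract pizza quantity. Concretely, the group $K_\Hf(V)$ is the universal target for valuations on $\Cf_\Hf(V)$ that are additive in the sense of the inclusion–exclusion relations defining it, so the assignment $K \mapsto [K]$ is itself a valuation with values in $K_\Hf(V)$. The identity of Theorem~3.2.1 of~\cite{EMR} holds for an arbitrary valuation on closed convex polyhedral cones, so applying it to this universal valuation $\nu = [\cdot]$ immediately yields
\[
\sum_{T\in\Tc(\Hf)}(-1)^T[\overline{T}]
= \sum_{\varphi\in\Tt(\Phi)}\epsilon(\varphi)
\sum_{T\in\Tc(\Hf_\varphi)}(-1)^T[\overline{T}],
\]
which is exactly $P(\Hf)=\sum_{\varphi}\epsilon(\varphi)P(\Hf_\varphi)$ once we recall the definition of the abstract pizza quantity.

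The first step is therefore to recall precisely the statement of~\cite[Theorem~3.2.1]{EMR}: for a fixed valuation $\nu$ on $\Cf_\Hf(V)$ (or on all closed convex polyhedral cones of $V$), with $\Hf$ the arrangement associated to $\Phi^+$, one has $\sum_T(-1)^T\nu(\overline{T}) = \sum_{\varphi\in\Tt(\Phi)}\epsilon(\varphi)\sum_{T\in\Tc(\Hf_\varphi)}(-1)^T\nu(\overline{T})$, where the signs $(-1)^T$ on the right are taken relative to the base chamber of $\Hf_\varphi$ corresponding to $\varphi^+$. The second step is to verify that $\nu = [\cdot]\colon \Cf_\Hf(V)\to K_\Hf(V)$ is a valuation in the required sense; this is essentially the definition of $K_\Hf(V)$ via the relations $[K]+[K']=[K\cup K']+[K\cap K']$, with the only point to check being that the cones appearing in the right-hand side of the $2$-structure identity — which are cones of the subarrangements $\Hf_\varphi$ — also lie in $\Cf_\Hf(V)$, since $\varphi^+\subseteq\Phi^+$ means every half-space bounded by an $H_\alpha$ with $\alpha\in\varphi^+$ is one of the defining half-spaces of $\Cf_\Hf(V)$. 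The third step is bookkeeping: match the two sides with the definitions of $P(\Hf)$ and $P(\Hf_\varphi)$, taking care that the base chambers and hence the signs are the ones specified in the statement.

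The main (and essentially only) obstacle is a compatibility issue rather than a deep one: one must make sure that the valuation $[\cdot]$ takes values in a group on which~\cite[Theorem~3.2.1]{EMR} can be applied. That theorem is stated for valuations valued in $\Z$ (or in an abelian group), so one should note that $K_\Hf(V)$ is an abelian group and that all the cones $\overline{T}$ for $T\in\Tc(\Hf_\varphi)$, as well as all intermediate unions and intersections arising in the proof of~\cite[Theorem~3.2.1]{EMR} when specialized to our arrangement, are elements of $\Cf_\Hf(V)$; this is where the inclusion $\varphi^+\subseteq\Phi^+$ is used. Granting this, the proof is a direct application. Finally, the alternative expression $P(\Hf) = \sum_T(-1)^T[T]$ in terms of open chambers follows from Remark~\ref{remark_open_chambers}, so no separate argument is needed for that formulation.

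\begin{proof}
This is obtained by applying~\cite[Theorem~3.2.1]{EMR} to the valuation
$\nu\colon \Cf_\Hf(V)\to K_\Hf(V)$, $K\mapsto [K]$. Indeed, by the
definition of $K_\Hf(V)$, the map $\nu$ satisfies
$\nu(K)+\nu(K')=\nu(K\cup K')+\nu(K\cap K')$ whenever
$K,K',K\cup K'\in\Cf_\Hf(V)$, so it is a valuation in the sense required
there. Since $\varphi^+\subseteq\Phi^+$ for every $2$-structure
$\varphi\in\Tt(\Phi)$, each half-space bounded by a hyperplane of
$\Hf_\varphi$ is bounded by a hyperplane of $\Hf$; hence all closed cones
$\overline{T}$ for $T\in\Tc(\Hf_\varphi)$, and all the cones arising in
the proof of~\cite[Theorem~3.2.1]{EMR} for these subarrangements, lie in
$\Cf_\Hf(V)$, so $\nu$ is defined on all of them.
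Theorem~3.2.1 of~\cite{EMR} then gives
\[
\sum_{T\in\Tc(\Hf)}(-1)^T[\overline{T}]
= \sum_{\varphi\in\Tt(\Phi)}\epsilon(\varphi)
\sum_{T\in\Tc(\Hf_\varphi)}(-1)^T[\overline{T}],
\]
where on the right the sign $(-1)^T$ is taken with respect to the base
chamber of $\Hf_\varphi$ associated to $\varphi^+$. Recognizing the two
sides as $P(\Hf)$ and $\sum_{\varphi\in\Tt(\Phi)}\epsilon(\varphi)
P(\Hf_\varphi)$ respectively yields the claimed identity.
\end{proof}
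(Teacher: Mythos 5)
Your proposal is correct and matches the paper's treatment: the paper simply cites this statement as Corollary~3.2.4 of~\cite{EMR}, and that corollary is obtained exactly as you describe, by applying the valuation-level identity of~\cite[Theorem~3.2.1]{EMR} to the universal valuation $K\mapsto[K]$ with values in $K_\Hf(V)$. Your check that the cones of each $\Hf_\varphi$ lie in $\Cf_\Hf(V)$ (so that $P(\Hf_\varphi)$ makes sense there) is the same observation the paper makes immediately after the theorem statement.
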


If $\varphi\in\Tt(\Phi)$ then the closures of the chambers of $\Hf_\varphi$ 
are elements of $\Cf_\Hf(V)$, so $P(\Hf_\varphi)$ makes
sense as an element of $K_\Hf(V)$.

\section{A dissection proof of the 
higher-dimensional pizza theorem}
\label{section_dissection}

\begin{definition}
Let $\Cf(V)$ be a nonempty family of
subsets of $V$ that is stable by finite intersections and affine
isometries and such that, if $C\in \Cf(V)$ and
$D$ is a closed affine half-space of $V$, then $C\cap D\in \Cf(V)$.
Furthermore, we assume that $\Cf(V)$ is
closed with respect to Cartesian products, that is,
if $C_{i} \in \Cf(V_{i})$ for $i=0,1$ then 
$C_{0} \times C_{1} \in \Cf(V_{0} \times V_{1})$.
For example, we could take $\Cf(V)$ to be the family of all
convex subsets of $V$, or of all closed (or compact) convex subsets, or of all
convex polyhedra.

We denote by $K(V)$ the quotient of the free abelian group
$\bigoplus_{C\in \Cf(V)}\Z[C]$ on $\Cf(V)$ by the relations:
\begin{itemize}
\item[--] $[\varnothing]=0$;
\item[--]
$[C\cup C']+[C\cap C']=[C]+[C']$ for all $C,C'\in \Cf(V)$ such that
$C\cup C'\in \Cf(V)$;
\item[--]
$[g(C)]=[C]$, for every $C\in \Cf(V)$ and every affine
isometry $g$ of $V$.
\end{itemize}
For $C\in \Cf(V)$, we still denote the image of $C$ in $K(V)$ by
$[C]$. 
\label{def_K(V)}
\end{definition}

\begin{definition}
A \emph{valuation} on $\Cf(V)$ with values in an abelian group $A$ is
a function $\Cf(V)\fl A$ that can be extended to a morphism
of groups $K(V)\fl A$.
\end{definition}

\begin{remark}
Define $\Bf(V)$ to be the relative Boolean algebra
generated by $\Cf(V)$, that is, the smallest collection
of subsets of $V$ that contains $\Cf(V)$ and is closed
under finite unions, finite intersections and set differences.
Groemer's Integral Theorem states that
a valuation on $\Cf(V)$ can be extended
to a valuation on the Boolean algebra $B(V)$; see~\cite{Gr}
and also~\cite[Chapter~2]{Klain_Rota}.
Applying this to the valuation $C \longmapsto [C]$ with values in $K(V)$,
we see that we can make sense of~$[C]$ for any $C\in \Bf(V)$.
For instance, we have
$[C_1 \cup C_2] = [C_1] + [C_2] - [C_1 \cap C_2]$
and
$[C_1 - C_2] = [C_1] - [C_1 \cap C_2]$.
Moreover, if $\Cf(V)$ is the set of all convex polyhedra in
$V$, then $\Bf(V)$ contains all polyhedra (convex or not), and also
half-open polyhedra.
\label{remark_[C]_for_more_general_C}
\end{remark}

Next we have the following straightforward lemma, whose proof we omit, 
which states that the class symbol is well-behaved with respect
to Cartesian products.
\begin{lemma}
The two class identities 
$[C_{0}] = [D_{0}]$ and $[C_{1}] = [D_{1}]$
in 
$K(V_{0})$ and~$K(V_{1})$, respectively,
imply that
$[C_{0} \times C_{1}] = [D_{0} \times D_{1}]$
in
$K(V_{0}) \times K(V_{1})$.
\label{lemma_product}
\end{lemma}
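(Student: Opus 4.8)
The plan is to prove the product compatibility of the class symbol directly from the presentation of the dissection group, reducing first to the case where one of the two class identities is trivial. Since $[C_0]=[D_0]$ in $K(V_0)$ and $[C_1]=[D_1]$ in $K(V_1)$, it suffices by transitivity to show two things: first, that $[C_0]=[D_0]$ in $K(V_0)$ implies $[C_0\times C_1]=[D_0\times C_1]$ in $K(V_0\times V_1)$ for any fixed $C_1\in\Cf(V_1)$; and second, symmetrically, that $[C_1]=[D_1]$ implies $[D_0\times C_1]=[D_0\times D_1]$. Composing these two steps yields $[C_0\times C_1]=[D_0\times D_1]$. So the whole lemma comes down to a single asymmetric statement: crossing with a fixed set from the other factor descends to a well-defined map on dissection groups.

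To establish that asymmetric statement, I would argue that the assignment $C_0\mapsto[C_0\times C_1]$, viewed as a map $\Cf(V_0)\to K(V_0\times V_1)$, is a valuation, i.e.\ factors through $K(V_0)$. For this one checks that it respects the three families of defining relations of $K(V_0)$. The relation $[\varnothing]=0$ is immediate since $\varnothing\times C_1=\varnothing$. The inclusion-exclusion relation follows from the set-theoretic identities $(A\cup B)\times C_1=(A\times C_1)\cup(B\times C_1)$ and $(A\cap B)\times C_1=(A\times C_1)\cap(B\times C_1)$, together with the fact that if $A\cup B\in\Cf(V_0)$ then $(A\cup B)\times C_1\in\Cf(V_0\times V_1)$ by the hypothesis that $\Cf$ is closed under Cartesian products; plugging these into the inclusion-exclusion relation of $K(V_0\times V_1)$ gives exactly the image of the corresponding relation in $K(V_0)$. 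Finally, for the isometry invariance: if $g$ is an affine isometry of $V_0$, then $g\times\id_{V_1}$ is an affine isometry of $V_0\times V_1$ (here one uses that the product inner product makes the factors orthogonal, so a product of isometries is an isometry), and $(g\times\id)(A\times C_1)=g(A)\times C_1$, so $[g(A)\times C_1]=[A\times C_1]$ in $K(V_0\times V_1)$. Hence the map descends, and in particular equal classes in $K(V_0)$ give equal classes in $K(V_0\times V_1)$.

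There is one subtlety to be careful about: the free abelian group $\bigoplus_{C\in\Cf(V_0)}\Z[C]$ may not literally map to $K(V_0\times V_1)$ as written, because we also need to know that each generator $C_0\times C_1$ genuinely lies in $\Cf(V_0\times V_1)$ — but this is guaranteed by the standing hypothesis in Definition~\ref{def_K(V)} that $\Cf(V)$ is closed under Cartesian products. Once the map $\bigoplus_{C_0}\Z[C_0]\to K(V_0\times V_1)$, $[C_0]\mapsto[C_0\times C_1]$, is defined, the verification above shows it kills all the defining relations of $K(V_0)$, hence factors through $K(V_0)\to K(V_0\times V_1)$. Applying this to the element $[C_0]-[D_0]=0$ gives $[C_0\times C_1]=[D_0\times C_1]$, and the symmetric argument with the roles of the factors reversed completes the proof.

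I do not expect any real obstacle here; this is the kind of ``universal property'' bookkeeping that the paper rightly leaves to the reader. The only mildly delicate point is making sure the isometry-invariance relation of $K(V_0)$ is respected, which requires the observation that a product of affine isometries is an affine isometry for the orthogonal-product inner product — true, but worth stating explicitly. (One should also note that the statement as phrased lands in $K(V_0)\times K(V_1)$ via the two projections, so it is really the conjunction of the two one-sided assertions, which is exactly what the two-step argument above proves.)
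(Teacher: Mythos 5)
Your proof is correct and is precisely the routine verification the paper declines to write out (it states the lemma "whose proof we omit"): you check that $C_0\mapsto[C_0\times C_1]$ kills the three defining relations of $K(V_0)$, using closure of $\Cf$ under Cartesian products and the fact that $g\times\id_{V_1}$ is an affine isometry of the orthogonal product, then chain the two one-sided implications. The only quibble is your final parenthetical: the target "$K(V_0)\times K(V_1)$" in the statement is evidently a typo for $K(V_0\times V_1)$, which is exactly where your argument places the identity, so no reinterpretation via projections is needed.
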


Let $\Hf$ be a central hyperplane arrangement on $V$ with fixed
base chamber.
If $K\in \Cf(V)$,
we have a morphism of groups $e_K:K_\Hf(V)\longrightarrow K(V)$ 
induced by the map $\Cf_\Hf(V) \longrightarrow \Cf(V)$, $C\longmapsto C\cap K$.

We denote by $P(\Hf,K)$ 
the image of $P(\Hf)$ by this morphism $e_K$;
in other words, we have
\begin{align*}
P(\Hf,K)
& =
\sum_{T\in\Tc(\Hf)}(-1)^T[\overline{T}\cap K].
\end{align*}
By Remark~\ref{remark_open_chambers}, we also have
\begin{align*}
P(\Hf,K)
& =
\sum_{T\in\Tc(\Hf)}(-1)^T[T\cap K].
\end{align*}

We state the main theorem of this paper.
First for $u,v \in V$ define the half-open line segment
$(u,v]$ by $\{(1-\lambda) u + \lambda v : 0 < \lambda \leq 1\}$.
Our main result is the following:
\begin{theorem}
[The Abstract Pizza Theorem]
Let $\Hf$ be a Coxeter hyperplane arrangement with Coxeter group~$W$
in an $n$-dimensional space $V$ such that $-\id_V\in W$.
Let $K\in \Cf(V)$ and $a\in V$. Suppose that $K$ is stable by the group~$W$
and contains the convex hull of the set $\{w(a) : w\in W\}$.
\begin{enumerate}
\item If $\Hf$ is not of type $A_1^{n}$, 
we have $P(\Hf,K+a)=0$ in $K(V)$.

\item If $\Hf$ has type $A_1^{n}$, $\Phi$ is the normalized
pseudo-root system corresponding to~$\Hf$ and 
$\Phi^+=\{e_1,\ldots,e_n\}$ where $\Phi^+\subset\Phi$ is the positive
system corresponding to the base chamber of~$\Hf$,
then the following identity holds:
\begin{align}
\label{equation_half_open}
P(\Hf,K+a)
& =
\left[\prod_{i=1}^n(0,2(a,e_i)e_i]\right] .
\end{align}
\end{enumerate}
\label{theorem_pizza_dissection}
\end{theorem}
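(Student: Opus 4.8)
The plan is to use Theorem~\ref{thm_pizza_abstract} to reduce the computation of $P(\Hf,K+a)$ to the case of product arrangements whose irreducible factors are of type $A_1$, $B_2$, or $I_2(2^k)$. Concretely, choose the positive system $\Phi^+$ corresponding to the base chamber and apply $e_{K+a}$ to the identity $P(\Hf)=\sum_{\varphi\in\Tt(\Phi)}\epsilon(\varphi)P(\Hf_\varphi)$, obtaining $P(\Hf,K+a)=\sum_{\varphi\in\Tt(\Phi)}\epsilon(\varphi)P(\Hf_\varphi,K+a)$. Since $-\id_V\in W$, Proposition~\ref{proposition_2_structures}(ii) guarantees that the $2$-structures have full rank $n$, so each $\Hf_\varphi$ is a genuine arrangement on all of $V$ and each term makes sense. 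Thus it suffices to prove the theorem for each $\Hf_\varphi$ separately: if some $\Hf_\varphi$ has a dihedral factor $I_2(2m)$ (with $m\geq 2$, noting $B_2=I_2(4)$), I claim $P(\Hf_\varphi,K+a)=0$; if $\Hf_\varphi$ has type $A_1^n$, I compute $P(\Hf_\varphi,K+a)$ explicitly via the half-open product formula \eqref{equation_half_open}.

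For the dihedral case, I would establish a product-type reduction: if $\Hf_\varphi=\Hf'\times\Hf''$ with $\Hf'$ of type $I_2(2m)$ on a plane $V'$ and $\Hf''$ arbitrary on the complement $V''$, then writing $a=(a',a'')$ and using that $K\cap(V'\times\{a''\})$-type slices inherit the dihedral symmetry, I would adapt Frederickson's dissection argument. The key geometric input is that the convex hull condition ensures that within the relevant plane $V'$ the point $a'$ lies inside the region cut by the $2m$ lines, so the classical pizza dissection (pairing up slices that are congruent after a rotation/reflection taken from the dihedral group) applies verbatim. Here I would need to be careful to track lower-dimensional pieces of the dissection; the refined Bolyai--Gerwien result of Section~\ref{section_Bolyai-Gerwien_plus_epsilon} handles the bookkeeping so that the alternating sum of the $[\overline{T}\cap(K+a)]$ vanishes as a class in $K(V')$, and then Lemma~\ref{lemma_product} promotes this to $P(\Hf_\varphi,K+a)=0$ in $K(V)$. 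If $\Hf_\varphi$ is of type $A_1^n$, I would do a direct induction on $n$ using the product structure: an $A_1$ arrangement on a line, cut at the origin, with $K+a$ symmetric about the origin and containing the segment from $-a$ to $a$, gives a two-term alternating sum that telescopes to the half-open segment $(0,2(a,e)e]$ (this is exactly the one-dimensional pizza identity), and Lemma~\ref{lemma_product} multiplies these up.

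Finally I would assemble the pieces. In case (1), where $\Hf$ is not of type $A_1^n$, every $2$-structure $\varphi$ must contain at least one $B_2$ or $I_2(2^k)$ factor—otherwise $\varphi$ would be of type $A_1^n$ and, by the classification of $2$-structures and the hypothesis on $\Hf$, this cannot happen (indeed the type of $\Hf$ itself is not $A_1^n$, and a full-rank $A_1^n$ $2$-structure forces $\Phi$ to be $A_1^n$). Hence every term $P(\Hf_\varphi,K+a)$ vanishes, so $P(\Hf,K+a)=0$. In case (2), $\Hf$ itself is its unique $2$-structure (up to the $W$-action, which is trivial here since $W$ is already generated by the $s_{e_i}$ and $\epsilon$ of the trivial structure is $1$), so $P(\Hf,K+a)=P(\Hf_\varphi,K+a)=\left[\prod_{i=1}^n(0,2(a,e_i)e_i]\right]$, as desired.

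\textbf{Main obstacle.} I expect the hard part to be the dihedral case: making Frederickson's dissection work uniformly for $I_2(2m)\times\Hf''$ over an arbitrary (possibly nonconvex, possibly infinite-volume) region $K$ that merely shares the symmetry, and especially controlling the lower-dimensional strata so that the identity holds at the level of classes in $K(V)$ rather than just for volume. This is where the Bolyai--Gerwien refinement of Section~\ref{section_Bolyai-Gerwien_plus_epsilon} is essential, and where the bulk of the technical work lies.
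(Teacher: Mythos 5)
Your overall skeleton --- reduce to $2$-structures via Theorem~\ref{thm_pizza_abstract}, show that any $2$-structure containing a dihedral factor contributes zero, and compute the $A_1^n$ contribution as a half-open parallelotope --- is exactly the paper's strategy, and your treatment of case (2) and of the dihedral factors matches Lemma~\ref{lemma_pizza_for_A_1^rxI_2^s}. However, your final assembly of case (1) contains a genuine gap: the claim that ``a full-rank $A_1^n$ $2$-structure forces $\Phi$ to be $A_1^n$'' is false. For example, $I_2(6)$ (and more generally $I_2(2m)$ with $m$ odd, as well as $D_{2n}$, $E_7$, $E_8$, $H_3$, $H_4$) satisfies $-\id_V\in W$, is not of type $A_1^2$, and yet all of its $2$-structures are of type $A_1^2$: its roots sit at multiples of $\pi/6$, so it contains orthogonal pairs of roots but no $B_2$ or $I_2(2^k)$ subsystem. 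In all such cases every term $P(\Hf_\varphi,K+a)$ equals the \emph{nonzero} class $\bigl[\prod_{e\in\varphi\cap\Phi^+}(0,2(a,e)e]\bigr]$, and the entire remaining difficulty is to show that the alternating sum $\sum_{\varphi\in\Tt(\Phi)}\epsilon(\varphi)\bigl[\prod_{e\in\varphi\cap\Phi^+}(0,2(a,e)e]\bigr]$ vanishes. Your proposal never addresses this.

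This is precisely where the paper needs Theorem~\ref{thm_Bolyai-Gerwien_plus_epsilon}; you do invoke the Bolyai--Gerwien refinement, but only for bookkeeping of lower-dimensional strata in the dihedral dissection, which the paper in fact handles by a direct dissection in Lemma~\ref{lemma_dissection_proof_dihedral}(iii). The missing argument runs as follows: by Theorem~\ref{thm_Bolyai-Gerwien_plus_epsilon} it suffices to check that all intrinsic volumes of the alternating sum vanish; the $V_i$ with $i\leq n-1$ vanish on each half-open parallelotope by Lemma~\ref{lemma_intrinsic_volume_half_open_parallelotope}; and the top intrinsic volume of the sum, namely $f(a)=\sum_{\varphi}\epsilon(\varphi)\prod_{e\in\varphi\cap\Phi^+}2(a,e)$, is a homogeneous polynomial of degree $n$ in $a$ satisfying $f(w(a))=\det(w)f(a)$, hence vanishing on all $|\Hf|>n$ hyperplanes of $\Hf$, which forces $f=0$. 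Without this step (or an equivalent cancellation argument across distinct $2$-structures), your proof only covers those $\Phi$ whose $2$-structures happen to contain a dihedral factor.
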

Here we are using Remark~\ref{remark_[C]_for_more_general_C} to make
sense of the right-hand side of equation~\eqref{equation_half_open}.

The conditions on $K$ are satisfied if for example $K$ is convex,
contained in $\Cf(V)$, stable by $W$ and $0\in K+a$. Indeed, the last condition
implies that $-a\in K$; as
$-\id_V\in W$ by assumption, this in turns implies that $a\in K$,
hence that $K$ contains the convex hull of the set $\{w(a) : w \in W\}$.

We will give the proof of Theorem~\ref{theorem_pizza_dissection} at the end of the section. This proof does not use Theorem~1.2 of~\cite{EMR_pizza}, so we obtain
a new proof of that result.

Let $V_0,\ldots,V_n$ denote the intrinsic volumes on $V$
(see \cite[Section~4.2]{Schneider}).

\begin{lemma}
Let $(v_1,\ldots,v_k)$ be an orthogonal family of vectors in $V$. Then
\[V_i((0,v_1]\times\ldots\times(0,v_k])=0\]
for $0\leq i\leq k-1$.
\label{lemma_intrinsic_volume_half_open_parallelotope}
\end{lemma}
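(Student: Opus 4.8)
The plan is to compute $V_i$ on a half-open orthogonal parallelotope by first reducing to the closed parallelotope and then exploiting the product structure. First I would observe that, in the abstract dissection group $K(V)$, the class $[(0,v_1]\times\cdots\times(0,v_k]]$ differs from the class of the closed parallelotope $[[0,v_1]\times\cdots\times[0,v_k]]$ only by classes supported on lower-dimensional faces; more precisely, by inclusion-exclusion over which factors one ``closes up'' one expresses $[(0,v_1]\times\cdots\times(0,v_k]]$ as an alternating sum of classes $[\,[0,v_{i_1}]\times\cdots\times[0,v_{i_j}]\,]$ of closed boxes of every dimension $j$ between $0$ and $k$ (each box sitting inside an affine subspace of $V$, but translated so it is a genuine box). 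Applying $V_i$ and using that $V_i$ is a valuation, the problem reduces to computing $V_i$ of a closed orthogonal box.

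Next I would use the behaviour of intrinsic volumes under orthogonal products together with their normalization: for a compact convex body that is a product $C_0\times C_1$ of convex bodies lying in orthogonal subspaces, $V_i(C_0\times C_1)=\sum_{p+q=i}V_p(C_0)V_q(C_1)$ (see \cite[Section~4.2]{Schneider}). Iterating, $V_i$ of the closed box $[0,v_{i_1}]\times\cdots\times[0,v_{i_j}]$ is the degree-$i$ part of $\prod_{\ell}(1+\|v_{i_\ell}\|\,t)$, which vanishes for $i>j$ and equals the $i$-th elementary symmetric function of the $\|v_{i_\ell}\|$ for $i\le j$. In particular a closed box of dimension $j<i$ contributes $0$, so only the boxes with $j\ge i$ matter, and since here we only care about $i\le k-1$ every closed sub-box can contribute. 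The remaining task is then a purely combinatorial cancellation: summing the signed contributions of all sub-boxes, the coefficient of each monomial $\prod_{\ell\in S}\|v_\ell\|$ (with $|S|=i$) is $\sum_{j\ge i}(-1)^{k-j}\binom{k-i}{j-i}$, which telescopes to $0$ because $k-i\ge 1$.

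Alternatively, and perhaps more cleanly, I would prove it directly by induction on $k$ using the product formula for intrinsic volumes and the one-dimensional base case. For $k=1$ we have $[(0,v_1]]=[[0,v_1]]-[\{0\}]$ in $K(V)$, so $V_i((0,v_1])=V_i([0,v_1])-V_i(\{0\})$, which is $V_0([0,v_1])-V_0(\{0\})=1-1=0$ for $i=0$, giving the base case (the range $0\le i\le k-1$ is just $i=0$ here). For the inductive step, write the box as $(0,v_1]\times Q$ with $Q=(0,v_2]\times\cdots\times(0,v_k]$, note $[(0,v_1]\times Q]=[[0,v_1]\times Q]-[\{0\}\times Q]$, and apply the product formula $V_i(C_0\times C_1)=\sum_{p+q=i}V_p(C_0)V_q(C_1)$ to both terms; the factor $V_0$ terms cancel and one is left with $\sum_{p\ge 1}V_p([0,v_1])\bigl(V_{i-p}(Q)-V_{i-p}(Q)\bigr)$ — wait, that is not quite it, since $[0,v_1]$ and $\{0\}$ have the same $V_0$ but $[0,v_1]$ also has $V_1=\|v_1\|$. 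So the difference is $V_1([0,v_1])V_{i-1}(Q)=\|v_1\|\,V_{i-1}(Q)$, and since $i-1\le k-2=(k-1)-1$, the inductive hypothesis applied to the $(k-1)$-fold product $Q$ gives $V_{i-1}(Q)=0$. This closes the induction.

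The main obstacle is essentially bookkeeping: one must be careful that the product formula for intrinsic volumes is being applied to genuine convex bodies lying in orthogonal affine subspaces (which is legitimate here because after translating, $[0,v_1]\times Q$ and $\{0\}\times Q$ are honest orthogonal products, and intrinsic volumes are invariant under the ambient isometric embedding), and that the valuation property is used correctly to pass from the set-theoretic identity $(0,v_1]\times Q=([0,v_1]\times Q)\smallsetminus(\{0\}\times Q)$ to the relation $[(0,v_1]\times Q]=[[0,v_1]\times Q]-[\{0\}\times Q]$ in the Boolean-algebra extension of $K(V)$ (Remark~\ref{remark_[C]_for_more_general_C}), so that any valuation, in particular $V_i$, respects it. No single step is deep; the second inductive argument is the shortest and I would write that one up.
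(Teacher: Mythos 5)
Your proposal is correct, and the inductive argument you say you would write up is essentially the paper's proof: both rest on the multiplicativity of the intrinsic volumes over orthogonal products (the cited Lemma~14.2.1 of Schneider--Weil, or Proposition~4.2.3 of Klain--Rota) together with the observation that $V_0$ of a half-open segment is $0$ because $V_0$ is the Euler--Poincar\'e characteristic with compact support. Your induction on $k$ simply unrolls the product formula one factor at a time, so there is no substantive difference in approach.
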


\begin{proof}
By Lemma~14.2.1 of~\cite{Schneider-Weil}
or
Proposition~4.2.3 of~\cite{Klain_Rota}, it suffices to prove that,
if $a<b$ are real numbers, the $0$th intrinsic volume of the
half-open segment $(a,b]\subset\Rrr$ is $0$. As the $0$th intrinsic
volume is the Euler-Poincar\'e characteristic with compact support, this is clear.
\end{proof}

\begin{corollary}
We keep the notation and hypotheses of Theorem~\ref{theorem_pizza_dissection}.
If $\Hf$ is not of type $A_1^n$, we have
\begin{equation}
\sum_{T\in\Tc(\Hf)}(-1)^T V_i(T\cap(K+a))=0
\label{equation_intrinsic_volumes}
\end{equation}
for every $0 \leq i \leq n$, where $K$ is assumed to be convex if
$i\not=n$.
If $\Hf$ has type $A_1^n$ and $K$ is convex
then equation~\eqref{equation_intrinsic_volumes}
holds for $0 \leq i \leq n-1$.
\end{corollary}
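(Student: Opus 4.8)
The plan is to apply the Abstract Pizza Theorem (Theorem~\ref{theorem_pizza_dissection}) together with the fact that each intrinsic volume $V_i$ is a valuation on $\Cf(V)$ in the sense defined after Definition~\ref{def_K(V)}, so it factors through a group morphism $K(V)\fl\Rrr$. Indeed, the intrinsic volumes are valuations on the family of convex bodies (or convex polyhedra), and they are invariant under affine isometries, so they descend to $K(V)$; this is exactly the compatibility required to evaluate both sides of the identities in Theorem~\ref{theorem_pizza_dissection} by applying $V_i$. Whether we may take $\Cf(V)$ to be all convex sets or must restrict to convex bodies/polyhedra is why the statement assumes $K$ convex when $i\neq n$: for $i=n$ the top intrinsic volume is ordinary Lebesgue volume, which is defined for arbitrary measurable sets, whereas the lower intrinsic volumes are only defined (via the Steiner formula / Hadwiger's theorem) on convex sets and their relative Boolean algebra.

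First I would treat case~(1). Since $\Hf$ is not of type $A_1^n$, Theorem~\ref{theorem_pizza_dissection}(1) gives $P(\Hf,K+a)=0$ in $K(V)$. Applying the group morphism $K(V)\fl\Rrr$ induced by $V_i$ to the identity $P(\Hf,K+a)=\sum_{T\in\Tc(\Hf)}(-1)^T[\overline{T}\cap(K+a)]$ yields $\sum_{T}(-1)^T V_i(\overline{T}\cap(K+a))=0$. For $i=n$, volume does not see the difference between $T$ and $\overline{T}$ so this is the desired equation; for $i<n$ with $K$ convex we use the expression $P(\Hf,K+a)=\sum_T(-1)^T[T\cap(K+a)]$ from Remark~\ref{remark_open_chambers} applied after $e_{K+a}$, obtaining equation~\eqref{equation_intrinsic_volumes} directly with the open chambers.

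Next, case~(2): $\Hf$ has type $A_1^n$. Here Theorem~\ref{theorem_pizza_dissection}(2) gives
\[
P(\Hf,K+a)=\left[\prod_{i=1}^n\bigl(0,2(a,e_i)e_i\bigr]\right]
\]
in $K(V)$. Applying $V_i$ for $0\le i\le n-1$, the left side becomes $\sum_T(-1)^T V_i(T\cap(K+a))$ and the right side becomes $V_i$ of the half-open parallelotope $\prod_{i=1}^n(0,2(a,e_i)e_i]$. By Lemma~\ref{lemma_intrinsic_volume_half_open_parallelotope}, applied to the orthogonal family $(2(a,e_1)e_1,\ldots,2(a,e_n)e_n)$ (discarding any zero vectors, which only lowers the relevant dimension and keeps the vanishing range at least as large), this intrinsic volume is $0$ for $0\le i\le n-1$. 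Hence equation~\eqref{equation_intrinsic_volumes} holds in this range as well.

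The main point requiring care — the only genuine obstacle — is the justification that $V_i$ really is a valuation on a suitable $\Cf(V)$ in the precise sense of this paper, namely that it extends to a group morphism out of $K(V)$: one needs the inclusion-exclusion (additivity) relation and affine-isometry invariance, both of which are classical for intrinsic volumes on convex bodies (see \cite[Section~4.2]{Schneider}), but one must also check that the family $\Cf(V)$ used to define $K(V)$ can be taken small enough to consist of convex bodies/polyhedra (so that $V_i$ is defined on it) yet large enough that $K+a$, the closed chambers, and the half-open parallelotope all have well-defined classes — the latter via Groemer's theorem as in Remark~\ref{remark_[C]_for_more_general_C}. Once this bookkeeping is in place, the corollary is an immediate consequence of Theorem~\ref{theorem_pizza_dissection} and Lemma~\ref{lemma_intrinsic_volume_half_open_parallelotope}.
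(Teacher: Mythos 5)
Your proposal is correct and follows essentially the same route as the paper's own proof: apply Theorem~\ref{theorem_pizza_dissection} and push the identity in $K(V)$ through the group morphism induced by the valuation $V_i$, then invoke Lemma~\ref{lemma_intrinsic_volume_half_open_parallelotope} in the $A_1^n$ case. The paper's argument is just a terser version of yours (it notes that in case~(1) the vanishing holds for any isometry-invariant valuation on $\Cf(V)$), and your added care about which family $\Cf(V)$ to use when $i\neq n$ matches the reason the statement assumes $K$ convex there.
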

\begin{proof}
If $\Hf$ is not of type $A_1^n$, then equation~\eqref{equation_intrinsic_volumes}
actually holds for any valuation on $\Cf(V)$ that is invariant under
the group of affine isometries; this includes the intrinsic volumes.

Suppose that $\Hf$ is of type $A_1^n$. Then we know that
equation~\eqref{equation_half_open} holds. The result then follows
from Lemma~\ref{lemma_intrinsic_volume_half_open_parallelotope}.
\end{proof}

\begin{remark}
Theorem~\ref{theorem_pizza_dissection} immediately implies
generalizations to our higher-dimensional case
of the ``thin crust'' and ``thick crust'' results
of Confection~3 and Leftovers~1 of~\cite{MaDe}
for an even number of cuts.

We obtain the ``thin crust'' result by evaluating the
$(n-1)$st intrinsic volume on $P(\Hf,K+a)$. Note that
this result holds for a pizza of any (convex) shape and even
in the case where we only make $n$ cuts, where $n$ is the
dimension.

To generalize the ``thick crust'' result, consider two sets
$K\subset L$ stable by $W$ and
in $\Cf(V)$. If $a\in V$
is such that $K$ contains the convex hull of the set $\{w(a) : w\in W\}$,
then
\[P(\Hf,(L - K)+a)=P(\Hf,L+a)-P(\Hf,K+a)=0,\]
so in particular
\[\sum_{T\in\Tc(\Hf)}(-1)^T\Vol(T\cap((L - K)+a))=0.\]
The case where $K$ and $L$ are balls with the same center
is the ``thick crust'' result.
\label{remark_crust}
\end{remark}

We now state and prove some lemmas that will be used in 
the proof of Theorem~\ref{theorem_pizza_dissection}.

\begin{lemma}
Let $\Hf_i$ be a hyperplane arrangement on $V_i$ for $i=0,1$.
Assume furthermore that $\Hf_1 = \{H_e\}_{e \in E_1}$ has type $A_1^r$
and $\dim(V_1) = r$.
Let $E_{1} = \{e_1, \ldots, e_r\}$ be the index set of $\Hf_{1}$.
Let $\Hf$ and $V$ be the Cartesian products $\Hf_0 \times \Hf_{1}$ and
$V_0 \times V_1$, respectively.
Then for every $K\in \Cf(V)$
that is stable under the orthogonal reflections in the hyperplanes
$V_0 \times H_{e_1}, \ldots, V_0 \times H_{e_r}$ and for every $a\in V_1$, if
$L=K+a$, we have the identity
\begin{align*}
P(\Hf,L)
& =
P\left(\Hf_0\times V_1,L\cap (V_0\times(0,2(a,e_1)e_1]\times\cdots\times(0,2(a,e_r)e_r])\right),
\end{align*}
where $\Hf_0\times V_1$ is the product of
$\Hf_0$ and the empty hyperplane arrangement on $V_1$.
\label{lemma_Cartesian_product_with_A_1}
\end{lemma}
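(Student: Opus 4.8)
The plan is to reduce the statement to the one-dimensional case, handled factor by factor, by peeling off the $A_1$ hyperplanes one at a time. First I would set up the basic one-dimensional identity: for a single hyperplane $H_e = e^\perp$ in a space $V_0 \times \R e$, with a set $K$ stable under the reflection $s_e$ and $L = K + te$ where $t = (a,e) > 0$ (the case $t < 0$ being symmetric, and $t = 0$ giving two chambers that cancel outright), the two closed half-chambers $\overline{T}^+ \cap L$ and $\overline{T}^- \cap L$ come with opposite signs. Using the reflection symmetry of $K$ about $H_e$ and the translation by $te$, one shows that $[\overline{T}^- \cap L] = [(\overline{T}^+ \cap L)\, \text{translated/reflected appropriately}]$ up to the ``middle slab'' $V_0 \times (0, 2te]$ (this is exactly the Carter--Wagon / Frederickson cut-and-slide in dimension one). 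Concretely, reflecting $L = K+te$ in $H_e$ gives $s_e(K) + s_e(te) = K - te$, and the overlap of $K+te$ and $K-te$ on the positive side is governed by the segment of length $2t$; the net effect after taking the alternating sum over the two chambers is that everything cancels except the translate of $L$ intersected with the strip over $(0, 2te]$, which lands in the $\Hf_0 \times \R e$ chamber structure. This gives the identity for $r = 1$.

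For the inductive step, write $\Hf_1 = \Hf_1' \times \{H_{e_r}\}$ where $\Hf_1'$ has type $A_1^{r-1}$ on the orthogonal complement of $\R e_r$ inside $V_1$. The chambers of $\Hf = \Hf_0 \times \Hf_1$ are products $T_0 \times T_1' \times T_r^\pm$ with signs multiplying. Grouping the sum over the two choices of $T_r^\pm$ first and applying the $r=1$ identity along the $e_r$-direction (valid since $K$ is stable under $s_{e_r}$, and one checks the relevant set still lies in $\Cf(V)$ using the hypotheses on $\Cf$ being closed under intersection with half-spaces and under products), one replaces $\Hf$ by $\Hf_0 \times \Hf_1' \times \R e_r$ and $L$ by $L \cap (V_0 \times V_1' \times (0, 2(a,e_r)e_r])$. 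The arrangement $\Hf_0 \times \Hf_1' \times \R e_r$ is again a product of $\Hf_1'$ (type $A_1^{r-1}$) with the arrangement $(\Hf_0 \times \R e_r)$ on $V_0 \times \R e_r$, so the inductive hypothesis applies provided the new set is still stable under the reflections $s_{e_1}, \ldots, s_{e_{r-1}}$ — which it is, since those reflections fix the $e_r$-coordinate and commute with everything, and $K$ was assumed stable under all of them. Iterating, after $r$ steps we arrive at $P(\Hf_0 \times V_1, L \cap (V_0 \times (0,2(a,e_1)e_1] \times \cdots \times (0,2(a,e_r)e_r]))$, which is the claim. I would use Remark~\ref{remark_open_chambers} (open chambers) throughout, and Lemma~\ref{lemma_product} together with Remark~\ref{remark_[C]_for_more_general_C} to make sense of the half-open boxes and to move products of class identities around.

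The main obstacle I anticipate is bookkeeping, not conceptual: making the one-dimensional ``slide'' argument rigorous \emph{in the dissection group}, i.e. producing the relations $[C \cup C'] + [C \cap C'] = [C] + [C']$ and the isometry-invariance relations that witness the cancellation, while staying inside the class $\Cf(V)$ (so that all intermediate sets are legitimate and the relations actually hold in $K(V)$). One must be careful that the pieces being glued and slid — e.g. the part of $\overline{T}^+ \cap L$ lying beyond the strip $(0,2te]$, and its reflected-then-translated image inside $\overline{T}^-$ — are obtained by intersecting $K$ with half-spaces and applying affine isometries, and that their unions land back in $\Cf(V)$; the hypotheses on $\Cf(V)$ in Definition~\ref{def_K(V)} are tailored precisely for this. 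A secondary point is handling the degenerate case $(a,e_i) = 0$ for some $i$ (then the two chambers in that direction are congruent and cancel, and the half-open segment $(0,0 \cdot e_i]$ is empty, so both sides vanish), and the sign issue when $(a,e_i) < 0$, which is dealt with by a reflection.
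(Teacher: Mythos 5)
Your proposal is correct and follows essentially the same route as the paper: reduce to $r=1$ by peeling off the $A_1$ factors one at a time, then in the one-dimensional case use the reflection in the shifted hyperplane $H_{e_1}+a$ (equivalently, reflect in $H_{e_1}$ and translate by $2(a,e_1)e_1$), which preserves $L$ and matches the part of each positive open chamber beyond the strip $(0,2(a,e_1)e_1]$ with the opposite-signed negative chamber, leaving exactly the strip. The paper's proof is this same argument, including the reduction to $(a,e_1)\geq 0$ by changing the sign of $e_1$ and the use of open chambers via Remark~\ref{remark_open_chambers}.
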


\begin{figure}
\begin{center}
\begin{tikzpicture}[scale = 0.7,domain=0:90]
\draw[->,thick] (-2.5,0) -- (11,0) node[above] {\small $V_{0} = H_{e_1}$};
\draw[->,thick] (0,-3.5) -- (0,6) node[right] {\small $V_{1}$};
\filldraw (3,1) circle (2pt);
\draw[-] (-2.5,2) -- (11,2) node[above] {\small $H_{e_1}+2(a,e_1)e_1$};
\draw[-,dotted] (-2.5,1) -- (11,1)  node[above] {\small $H_{e_1}+(a,e_1)e_1$};
\draw[-] (4.5,{-(4^3-(1.5)^3)^(1/3)+1}) -- (4.5,{(4^3-(1.5)^3)^(1/3)+1});
\draw[-] (5.5,{-(4^3-(2.5)^3)^(1/3)+1}) -- (5.5,{(4^3-(2.5)^3)^(1/3)+1});
\node at (5,0.3) {\small $T$};
\draw[] plot ({3+4*cos(\x)^(2/3)},{1+4*sin(\x)^(2/3)});
\draw[] plot ({3+4*cos(\x)^(2/3)},{1-4*sin(\x)^(2/3)});
\draw[] plot ({3-4*cos(\x)^(0.5)},{1+4*sin(\x)^(0.5)});
\draw[] plot ({3-4*cos(\x)^(0.5)},{1-4*sin(\x)^(0.5)});
\end{tikzpicture}
\end{center}
\caption{A schematic sketch of $V_0 \times V_1$
for the proof of Lemma~\ref{lemma_Cartesian_product_with_A_1}.}
\label{figure_H_cross_A_1}
\end{figure}
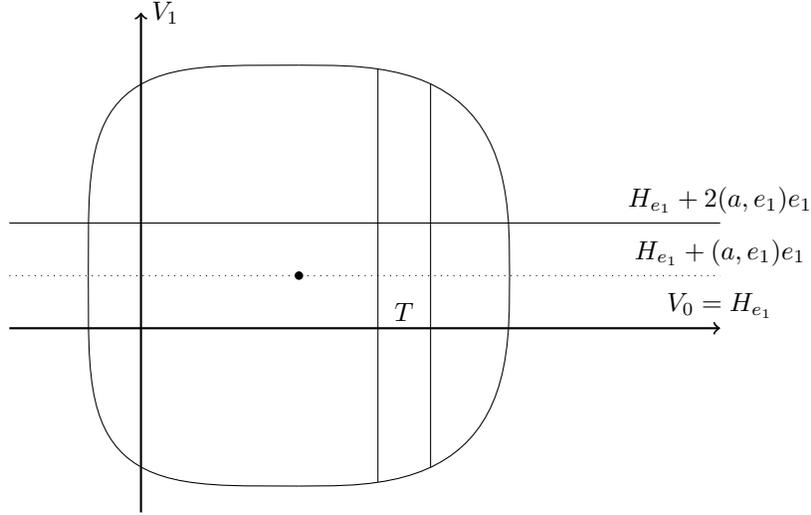

\begin{proof}
By a straightforward induction, we may assume that $r=\dim V_1=1$. 
Also, after changing the sign of~$e_1$, we may assume that
$(a,e_1)\geq 0$. See Figure~\ref{figure_H_cross_A_1} for a sketch
of the situation.
Let~$T$ be a chamber of the arrangement $\Hf_0$.
The classes of the two regions
$(T\times \Rrr_{> 0}e_1) \cap L$
and
$(T\times \Rrr_{< 0}e_1) \cap L$ of~$\Hf$
occur with opposite signs in the pizza quantity
$P(\Hf, L)$.
Note that the region
$(T\times \Rrr_{> 2(a,e_1)}e_1) \cap L$
is the orthogonal reflection of the region
$(T \times \Rrr_{< 0}e_1) \cap L$
in the affine hyperplane~$H_{e_1} + (a,e_1)e_1=H_{e_1}+a$.
Hence these regions have the same class in $K(V)$ which cancels
in the pizza quantity $P(\Hf,L)$, and the class
of the region $(T \times (0,2(a,e_1)e_1]) \cap L=
(T\times V_1)\cap L\cap(V_0\times(0,2(a,e_1)e_1])$
remains. As the map $T\longmapsto T\times V_1$ is a sign-preserving
bijection from $\Tc(\Hf_0)$ to $\Tc(\Hf_0\times V_1)$, this completes
the proof.
\vanish{
The proof for $P(\Hf,L)$ is similar, except that we
need to consider all the faces $F$ of $\Hf_0$, and that
$F\times\{0\}$ will also be a face of $\Hf_0\times\Hf_1$
(which explains why we obtain $[0,2(a,e_i)e_i]$ and not
$(0,2(a,e_i)e_i]$).
}
\end{proof}

We now consider the case of a hyperplane arrangement that is the
product of a $2$-dimensional dihedral arrangement and another arrangement.
Suppose that $V=V_0\times V_1$, where the factors are orthogonal, and
that $\Hf=\Hf_0\times\Hf_1$, where $\Hf_i$ is a hyperplane arrangement in
$V_i$. Suppose also that $\dim V_0 = 2$
and that $\Hf_0$ is an arrangement of type $I_2(2m)$
with Coxeter group $W_0$ where $m\geq 2$.
We view $W_0$ as a group of isometries of $V$ by making $w\in W_0$ act on
$V=V_0\times V_1$ by $w\times\id_{V_1}$. We also choose a family~$\Cf(V)$
as in Definition~\ref{def_K(V)}.

Let $a\in V_0$. We will describe a dissection of $V_0$.
The case where $m=4$ is shown in Figure~\ref{figure_dissection_dihedral}.
We call $L_0,\ldots,L_{2m-1}$ the lines of $\Hf_0$ (numbered so that
the angle between $L_0$ and $L_i$ is an increasing function of $i$)
and we assume that the point $a$ is in a chamber between
$L_{m-1}$ and $L_{m}$. Choose a closed half-space $D$ bounded by
$L_0$ and containing $a$ (this choice is unique if $a\not\in L_0$).
Then, for $0\leq i\leq 2m-1$, we denote by $T_i$ the unique chamber
of 
$\Hf$ contained in $D$ and
with boundary contained in $L_i\cup L_{i+1}$. We assume that
$(-1)^{T_0}=1$ for concreteness. The point $a$ is in the closure
of the chamber~$T_{m-1}$.

We write $\Tc_+=\{T\in\Tc(\Hf_0) : (-1)^T=1\}$ and
$\Tc_-=\{T\in\Tc(\Hf_0) : (-1)^T=-1\}$.
Let $W_a$ be the group of affine isometries generated by the
orthogonal reflections in the lines $L+a$, for $L\in\Hf_0$. We take
$R_0(a)$ to be the convex hull of the points $w(0)$ for $w\in W_a$.
This is the shaded polygon on Figure~\ref{figure_dissection_dihedral},
where the darker slices are the intersections with the closures of
chambers in~$\Tc_+$. 
We have the inclusion
$R_0(a)\subset\bigcup_{i=0}^{2m-2}\overline{T}_i$.
Finally we set
\[R_{0,\pm}(a)=R_{0}(a) \cap \bigcup_{T\in\Tc_{\pm}}T.\]

\begin{lemma}
The following three identities hold in $K(V_{0} \times V_{1})$:
\begin{enumerate}
\item
Let $K \in \Cf(V)$ such that $K$ is stable by $W_0$ and let $L=K+a$. Then
\[
\sum_{T\in\Tc} (-1)^{T} [L \cap ((T - R_0(a))\times V_1)]
=
0 . \]

\item
Let $K \in \Cf(V)$ such that $K$ is stable by $W_0$ and let $L=K+a$ then
\[P(\Hf,L)=
P(V_0\times\Hf_1,L\cap(R_{0,+}(a)\times V_1))-
P(V_0\times\Hf_1,L\cap(R_{0,-}(a)\times V_1)).\]

\item If $K_1\subset V_1$ is such that $R_0(a)\times K_1\in \Cf(V)$,
then
\[[R_{0,+}(a)\times K_1]=[R_{0,-}(a)\times K_1].\]
\end{enumerate}
\label{lemma_dissection_proof_dihedral}
\end{lemma}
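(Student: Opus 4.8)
The plan is to adapt Frederickson's dissection proof of the pizza theorem to the product arrangement $I_2(2m)\times\Hf_1$, keeping careful track of the half-open/lower-dimensional pieces so that the identities hold in $K(V_0\times V_1)$ and not merely after applying volume. The geometric heart is the classical observation that the reflections in the affine lines $L+a$ ($L\in\Hf_0$) generate a group $W_a$ acting on $V_0$, and that inside the fundamental polygon $R_0(a)$ the darker slices (intersections with closures of $\Tc_+$-chambers) can be matched bijectively with the lighter slices by elements of $W_a$; this matching is what will eventually give part (3), and it is the engine behind (2) as well.

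For part (1), I would argue that the regions $(T-R_0(a))\times V_1$, as $T$ ranges over $\Tc(\Hf_0)$, can be partitioned into pairs $\{T, s(T)\}$ where $s$ is a reflection in one of the \emph{linear} lines $L_i\in\Hf_0$ bounding the ``outer'' part of the arrangement, chosen so that $s$ fixes $R_0(a)$ setwise (because $R_0(a)$ is $W_0$-stable? — no: $R_0(a)$ is only $W_a$-stable, so the correct statement is that $T-R_0(a)$ and $s(T-R_0(a)) = s(T)-R_0(a)$ agree, using that the relevant reflection is in a line through the origin that is also a symmetry line of $R_0(a)$). Since $K$ is $W_0$-stable, $L=K+a$ is stable under reflections in the affine lines $L+a$; combining the linear symmetry of $R_0(a)$ with the affine symmetry of $L$ requires pairing reflections appropriately. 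Concretely, for each chamber $T$ not meeting $R_0(a)$, $T$ lies ``beyond'' some outer wall, and reflecting in the corresponding line through $0$ sends $L\cap((T-R_0(a))\times V_1)$ to $L\cap((s(T)-R_0(a))\times V_1)$ with opposite sign (since $s$ reverses orientation), so these classes cancel in pairs. The main obstacle is verifying that this pairing is well-defined and exhaustive — i.e.\ that every chamber outside $R_0(a)$ has a unique such partner — which is the combinatorial core and where I expect to spend the most effort; it amounts to the statement that $R_0(a)\subset\bigcup_{i=0}^{2m-2}\overline{T_i}$ together with the reflection structure of $I_2(2m)$.

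For part (2), I would start from $P(\Hf,L)=\sum_{T\in\Tc(\Hf_0)}(-1)^T[L\cap(T\times V_1)]$ (using Remark~\ref{remark_open_chambers}), split each $T\times V_1$ as $(T\cap R_0(a))\times V_1$ together with $(T-R_0(a))\times V_1$, and apply part (1) to kill the second family of terms. What remains is $\sum_{T\in\Tc(\Hf_0)}(-1)^T[L\cap((T\cap R_0(a))\times V_1)]$, and since $T\cap R_0(a)$ is nonempty only for chambers in $\Tc_+\cup\Tc_-$ meeting $R_0(a)$, this regroups as $[L\cap(R_{0,+}(a)\times V_1)]-[L\cap(R_{0,-}(a)\times V_1)]$; identifying each of these with $P(V_0\times\Hf_1,L\cap(R_{0,\pm}(a)\times V_1))$ uses that $V_0\times\Hf_1$ has a single ``$V_0$-chamber'' so its pizza quantity is just the class of the whole set, together with Lemma~\ref{lemma_product}. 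Finally, part (3) is the clean dissection statement: the element of $W_a$ realizing the slice-matching above is an affine isometry of $V_0$, hence $g\times\id_{V_1}$ is an affine isometry of $V$, and since $R_{0,+}(a)$ and $R_{0,-}(a)$ are (up to lower-dimensional overlap, handled by the $[C\cup C']+[C\cap C']=[C]+[C']$ relation in $K(V)$) related by piecewise application of such isometries, we get $[R_{0,+}(a)\times K_1]=[R_{0,-}(a)\times K_1]$ by summing the relation $[g(C)]=[C]$ over the pieces. The subtlety here, and the reason one needs the Bolyai--Gerwien refinement from Section~\ref{section_Bolyai-Gerwien_plus_epsilon}, is that the natural dissection has overlapping boundaries, so one must check the lower-dimensional boundary pieces also match up — this is the one genuinely delicate point in part (3).
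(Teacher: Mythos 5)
Your part (2) is exactly the paper's argument, and your part (3) is a correct outline of either of the paper's two routes (explicit Frederickson-style dissection of $R_0(a)$ with careful bookkeeping of boundary pieces, or Corollary~\ref{cor_Bolyai-Gerwien_plus_epsilon} plus equality of intrinsic volumes of $R_{0,\pm}(a)$, lifted to the product by Lemma~\ref{lemma_product}). The problem is part (1), on which both of the others depend.

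Your proposed cancellation for (1) pairs whole chambers $T\leftrightarrow s(T)$ via reflections $s$ in lines through the origin. This cannot work. First, such an $s$ preserves $K$ but not $L=K+a$: we have $s(L)=K+s(a)=L+(s(a)-a)\neq L$ unless $a$ lies on the reflecting line, and since $a$ sits in (the closure of) a single chamber this fails for essentially every line of $\Hf_0$. Likewise $s(R_0(a))=R_0(s(a))\neq R_0(a)$: the polygon $R_0(a)$ is $W_a$-symmetric (its symmetry lines pass through $a$), so no line through $0$ is a symmetry line of it, contrary to what you assert. Second, no \emph{single} isometry preserving $L$ can match two sets $T-R_0(a)$ and $T'-R_0(a)$ of opposite sign even in principle: the elements of $W_a$ that do preserve $L$ (reflections in the affine lines $L^\perp+a$, rotations about $a$) move the apex $0$ of the cone $T$, so they do not carry chambers to chambers; and the chambers outside the half-plane $D$ (which miss $R_0(a)$ entirely, so $T-R_0(a)=T$ is a full cone) are not isometric to the chambers inside $D$ minus their slice of $R_0(a)$. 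The missing idea is that each $T_i-R_0(a)$ must itself be dissected into three pieces --- a congruent copy $R_{i\pm1,\mp}$ of one of the outer chambers (obtained by reflecting that outer chamber in an affine line of the form $L_{2j+1}^\perp+a$), a leftover open region $S_{j,\pm}$, and an open boundary ray --- and the cancellation is then achieved piece by piece using \emph{different} elements of $W_a$ for the different types of pieces (reflections in $L_{2j+1}^\perp+a$ for the $R$-pieces, the rotation about $a$ by $\pi/m$ for the $S$-pieces, and rotations/reflections about $a$ for the rays). Since the target identity lives in $K(V)$ rather than in $\R$ after taking volumes, the one-dimensional rays cannot be ignored and must also be matched in sign-reversing pairs; your sketch omits them entirely. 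Without this finer dissection, part (1) --- and hence part (2) --- is not established.
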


\begin{figure}
\centering
\includegraphics[width=\textwidth]{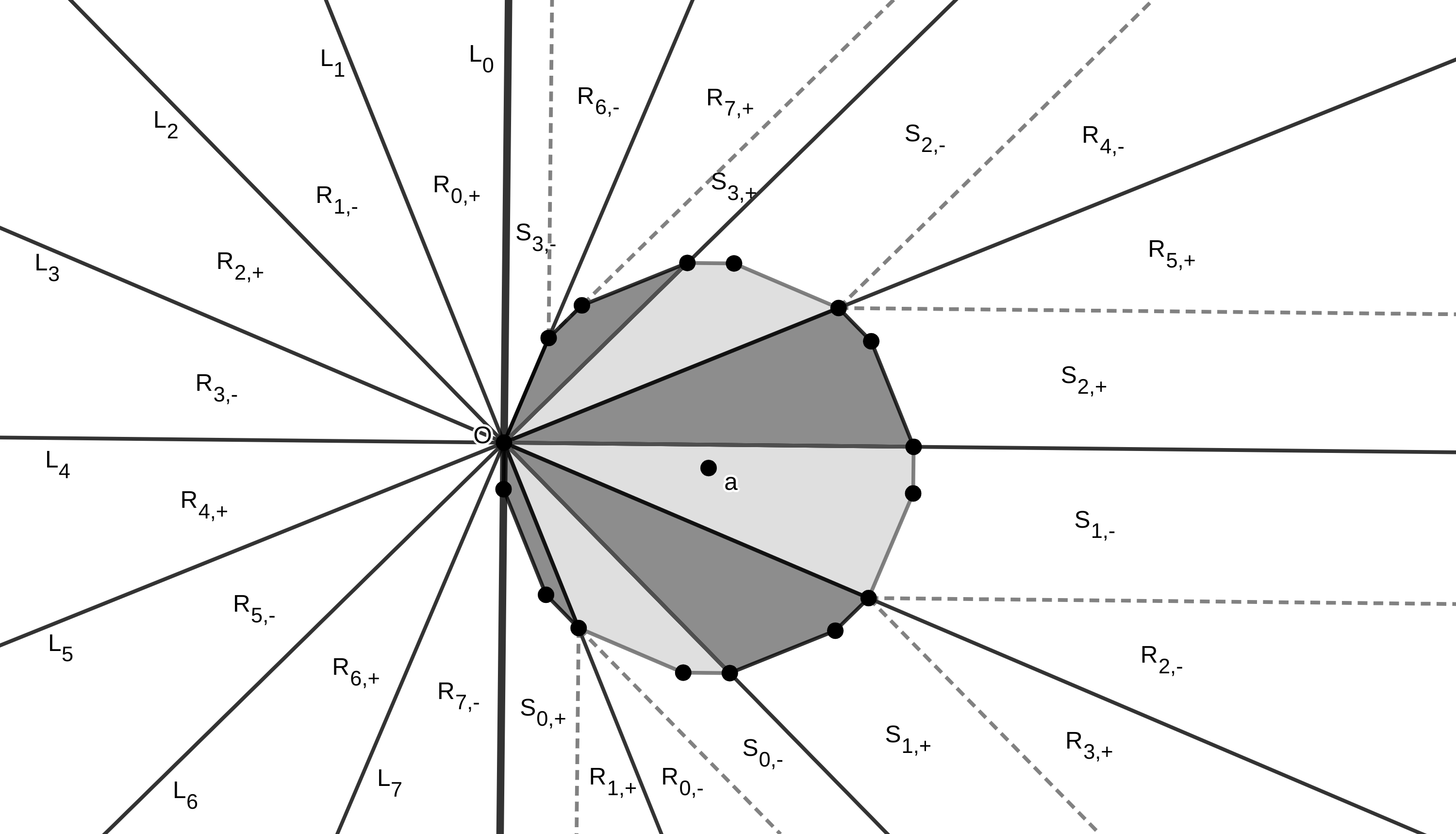}
\caption{A picture of the regions
in the proof of Lemma~\ref{lemma_dissection_proof_dihedral}(i) for
the case of $I_2(8)$.}
\label{figure_dissection_dihedral}
\end{figure}
\begin{proof}
We begin by proving~(i).
For $0\leq i\leq 2m-1$, we denote by $R_{i,\pm}$ the unique
chamber of~$\Hf$ not contained in $D$ and
with boundary contained in $L_i\cup L_{i+1}$, that is, the image
of~$T_i$ by the symmetry with center $0$; 
we write $R_{i,+}$ if
this chamber has sign~$+1$, or equivalently if $i$ is even, and
$R_{i,-}$ if this chamber has sign $-1$, or equivalently if
$i$ is odd. For every $0 \leq j \leq m-1$, we denote
by $R_{2j,-}$ and $R_{2j+1,+}$ the orthogonal reflection 
in the line $L_{2j+1}^\perp+a$ of 
$R_{2j,+}$ and $R_{2j+1,-}$, respectively.
Note that $R_{2j,-}\subset T_{2j+1}$ and $R_{2j+1,+}\subset T_{2j}$.
For $0 \leq j \leq m-1$ again, we denote by~$S_{j,+}$
the interior of $T_{2j} -(R_0(a)\cup R_{2j+1,+})$
and by $S_{j,-}$ the interior of
$T_{2j+1} - (R_0(a)\cup R_{2j,-})$.
Then $T_{2j} - R_0(a)$ is the disjoint union of
$R_{2j+1,+}$, $S_{j,+}$ and an open ray $D_{2j}$ starting at
an extremal point of $R_0(a)$ (the image of $0$ by the orthogonal
reflection in the line $L_{2j+1}^\perp+a$)
and parallel to~$L_{2j}$.
Similarly $T_{2j+1} - R_0(a)$ is the disjoint union
of $R_{2j,-}$, $S_{j,-}$ and an open ray $D_{2j+1}$ starting
at the same extremal point of $R_0(a)$ and parallel to $L_{2j+2}$. 
See Figure~\ref{figure_dissection_dihedral} for the case $m=4$,
where the rays $D_{2j}$ and $D_{2j+1}$ are dashed.

The union $\bigcup_{T\in\Tc_+}T$ is equal to the disjoint union
of the set~$R_{0,+}(a)$,
the regions~$R_{i,+}$ for $0\leq i\leq 2m-1$,
the regions~$S_{j,+}$ for $0\leq j\leq m-1$ and
the rays $D_{2j}$ for
$0\leq j\leq m-1$.
On the other hand, the union
$\bigcup_{T\in\Tc_-}T$ is equal to the disjoint union
of the set~$R_{0,-}(a)$,
the regions~$R_{i,-}$ for $0\leq i\leq 2m-1$,
the regions~$S_{j,-}$ for $0\leq j\leq m-1$ and
the rays~$D_{2j+1}$ for $0\leq j\leq m-1$.
Consider the following four observations:
\begin{itemize}
\item[--]
For $0\leq i\leq 2m-1$ the region~$R_{i,-}$ is the image
of $R_{i,+}$ by the orthogonal reflection in the affine line
$L_{2\lfloor i/2 \rfloor+1}^\perp+a$.
\item[--]
For $0\leq j\leq m-1$ 
the region $S_{j,-}$ is the image
of $S_{j,+}$ by the rotation with center $a$ and angle~${\pi}/{m}$.
\item[--]
For $0\leq j\leq m-2$ the ray $D_{2j+3}$ is the image
of the ray $D_{2j}$ by the rotation with center~$a$ and
angle ${2\pi}/{m}$.
\item[--]
The ray $D_{2m-2}$ is the image of $D_1$ by the orthogonal
reflection in the affine line $L_m+a$.
\end{itemize}
Each of them is of the form: the set~$X$ is the image of the set~$Y$ under
an affine isometry~$g$ belonging to the group~$W_a$.
Since the set $L = K+a$ is invariant under $g$, we obtain that
the set $L \cap (X \times V_1)$ is the image of $L \cap (Y \times V_1)$, and
hence that $[L \cap (X \times V_1)] = [L \cap (Y \times V_1)]$.
Statement~(i) follows by summing over all pairs of sets $X$ and $Y$.

Next we prove~(ii).
There is a bijection
$\Tc(\Hf_0)\times\Tc(\Hf_1)\stackrel{\sim}{\longrightarrow}\Tc(\Hf)$
where
$(T,T') \longmapsto T\times T'$ and $(-1)^{T\times T'}=(-1)^{T}(-1)^{T'}$
for all $T\in\Tc(\Hf_0)$ and $T'\in\Tc(\Hf_1)$. Hence
\begin{align}
P(\Hf,L) & = 
\sum_{T'\in\Tc(\Hf_1)}
\sum_{T\in\Tc(\Hf_0)}
(-1)^{T} (-1)^{T'} [L \cap (T\times{T'})] .
\label{equation_Wednesday}
\end{align}
Fix $T'\in\Tc(\Hf_1)$ for a moment.
The fact that $K$ is stable by~$W_{0}$ implies that
$K \cap (V_0 \times {T'})$
is also stable by~$W_{0}$.
Hence applying statement~(i) to the set
$(K  \cap (V_0 \times {T'})) + a 
= L  \cap (V_0 \times {T'})$ yields
\begin{align}
\sum_{T\in\Tc(\Hf_0)} (-1)^{T} [L \cap ((T - R_0(a))\times {T'})]
& =
0 .
\label{equation_fixed_T'}
\end{align}
Multiplying 
equation~\eqref{equation_fixed_T'} 
with the sign $(-1)^{T'}$,
summing over all $T'\in\Tc(\Hf_1)$,
and subtracting the result from
equation~\eqref{equation_Wednesday}
yields
\begin{align*}
P(\Hf,L)
& = 
\sum_{T'\in\Tc(\Hf_1)}
\sum_{T\in\Tc(\Hf_0)}
(-1)^{T} (-1)^{T'} [L \cap ((T \cap R_0(a)) \times {T'})] \\
& =
\sum_{T'\in\Tc(\Hf_1)}(-1)^{T'}
[L\cap(R_{0,+}(a)\times T')]-
\sum_{T'\in\Tc(\Hf_1)}(-1)^{T'}[L\cap(R_{0,-}(a)\times{T'})]\\
& =
P(V_0\times\Hf_1,L\cap(R_{0,+}(a)\times V_1))
-
P(V_0\times\Hf_1,L\cap(R_{0,-}(a)\times V_1)).
\end{align*}

Finally we consider~(iii).
By Lemma~\ref{lemma_product},
it suffices to show that, if we take $\Cf(V_0)$ to be the set
of convex polygons in~$V_0$, then $[R_{0,+}(a)]=[R_{0,-}(a)]$ in
$K(V_0)$. This follows from Corollary~\ref{cor_Bolyai-Gerwien_plus_epsilon}
and from the fact that the intrinsic volumes of $R_{0,+}(a)$ and $R_{0,-}(a)$
are equal (which is an easy calculation), but we also give
a direct proof.
We consider the following dissection of the polygon~$R_0(a)$;
see Figure~\ref{figure_dissection_dihedral_2} for the case $m=4$.
For $0\leq i\leq 2m-1$ let $P_i$ be the image
of $0$ by the orthogonal reflection in the line $L_i^\perp+a$; note that
$P_i$ is a boundary point of $R_0(a)$, and that it is on $L_i$.
We describe the pieces of the dissection of $R_0(a)$:
\begin{itemize}
\item[--]
For $1 \leq i \leq m-2$ consider the pair of isosceles
triangles $B_{i,+}$ and $B_{i,-}$ that have 
one side equal to the segment
$[0,P_{2i}]$, angles equal to ${\pi}/{2m}$ at the vertices $0$ and $P_i$, and
such that $B_{i,\pm}$ is in a chamber with sign $\pm 1$; in
other words, the triangle $B_{i,-}$ is in the chamber~$T_{2i-1}$, and
$B_{i,+}$ is in the chamber $T_{2i}$.

\item[--]
Let $B_{0,+}$ be the isosceles triangle contained in $T_0$ with
one side equal to the segment
$[0,P_0]$ and angles equal to ${\pi}/{2m}$ at the vertices $0$ and $P_0$.

\item[--]
Consider the isosceles triangle contained in $T_{2m-3}$ with
one side equal to the segment
$[0,P_{2m-2}]$ and angles equal to ${\pi}/{2m}$ 
at the vertices $0$ and $P_{2m-2}$;
this splits into an isosceles triangle $B_{0,-}$ congruent
to $B_{0,+}$ and an isosceles trapezoid $B_{m-1,-}$ having one edge equal
to $[0,P_{2m-2}]$.

\item[--]
Let $B_{m-1,+}$ be the image of $B_{m-1,-}$ by the orthogonal reflection in
the line $L_{2m-2}$; then $B_{m-1,+}$ is contained in the chamber
$T_{2m-2}$.
\end{itemize}

\begin{figure}
\centering
\includegraphics[width=\textwidth]{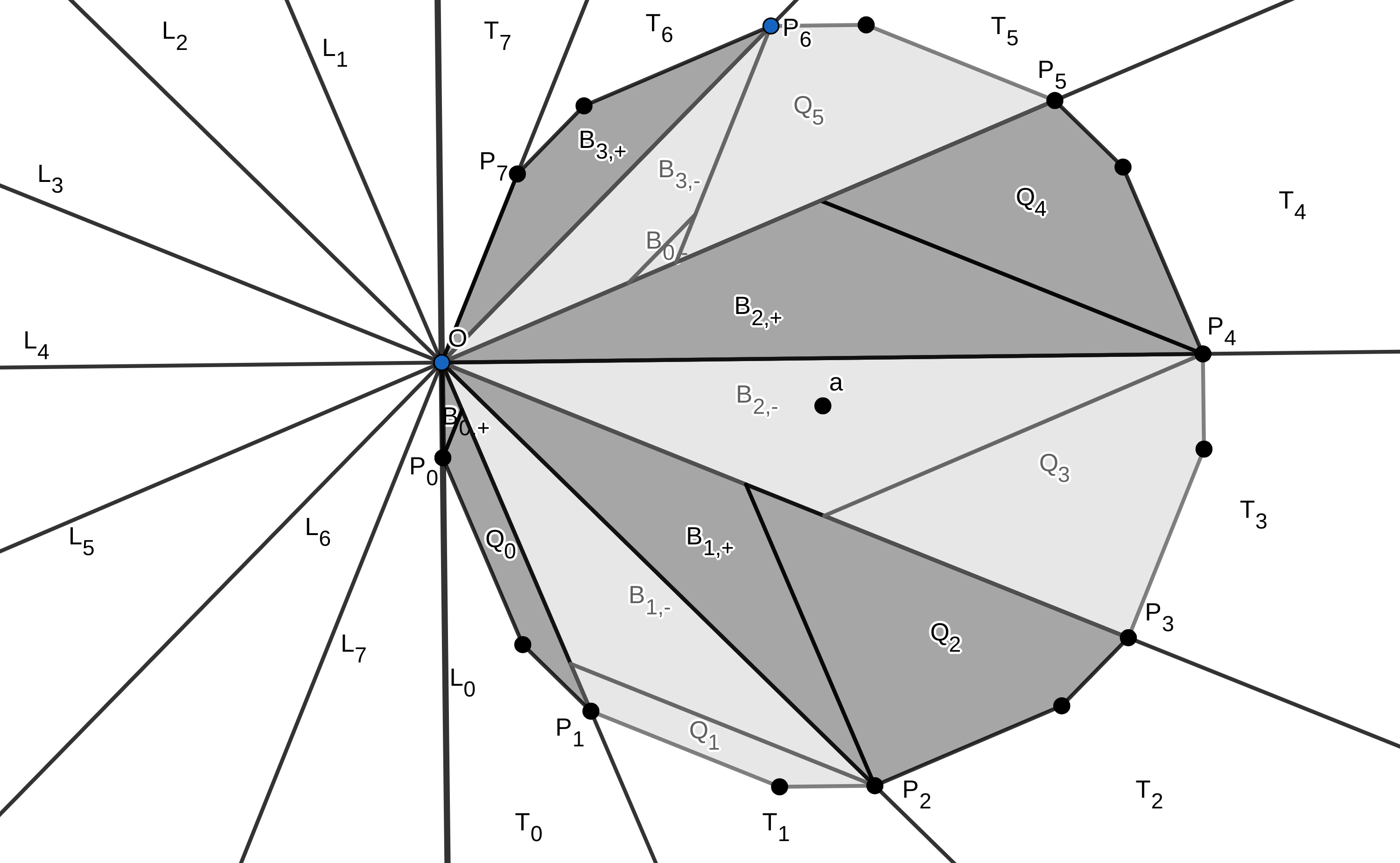}
\caption{
The dissection of the polygon $R_0(a)$ in
the case of $I_2(8)$. 
The darker shaded regions are the intersections
with the chambers which have a positive sign.}
\label{figure_dissection_dihedral_2}
\end{figure}

To finish the dissection of
$R_0(a)$, we note that, for $0\leq i\leq 2m-3$, we still have
a quadrilateral piece $Q_i$ left over in $T_i\cap R_0(a)$.
Then for $1 \leq i \leq 2m-3$ the quadrilateral
$Q_{i}$ is the image of~$Q_{i-1}$ by the rotation with center $a$ and
angle~${\pi}/{m}$. 
Indeed, this is true for the intersections of these
quadrilaterals with the boundary of $R_0(a)$ (which consist of two
edges with endpoints $P_{i-1}$ and~$P_{i+1}$), 
and it is easy to calculate the angles at the vertices of
this intersection and to see that they correspond:
If $1\leq i\leq 2m-2$ and $i$ is even, respectively odd, the angle of $Q_{i-1}$ at
$P_i$ is ${(i-1)\pi}/{2m}$, respectively ${i\pi}/{2m}$. 
If $0 \leq i \leq 2m-3$ and $i$ is even, respectively odd, the angle
of $Q_i$ at $P_i$ is ${(2m-2-i)\pi}/{2m}$,
respectively ${(2m-1-i)\pi}/{2m}$.

This is Frederickson's dissection; see pages 28--31 of~\cite{Fred}.
That paper is only interested in giving a dissection proof that
the alternating sum of the areas is equal to zero. Hence it can safely
ignore line segments of area zero, whereas we are proving
an identity in $K(V_{0})$ and have to be careful with all regions,
including lower dimensional ones.

The set $R_{0,+}(a)$ is the disjoint union of the following subsets:
\begin{itemize}
\item[--]
For $0\leq i\leq m-2$ let $B'_{i,+}$ be the intersection of the triangle
$B_{i,+}$ with the interior of the chamber containing $B_{i,+}$.
In other words,
$B'_{i,+}$ is the union of the interior of $B_{i,+}$ and the relative interior of
one of its two equal sides.

\item[--]
Let $B'_{m-1,+}$ be the intersection of the trapezoid $B_{m-1,+}$ and
$T_{2m-2}$.
That is, $B'_{m-1,+}$ is the union of the interior of
$B_{m-1,+}$ and the intersection of its boundary with the boundary
of $R_0(a)$, minus the two extremal points of this intersection.

\item[--]
For $0\leq j\leq m-2$ let $Q'_{2j}$ be the intersection of the quadrilateral
$Q_{2j}$ and $T_{2j}$. 
That is, $Q'_{2j}$ is the union of the interior of
$Q_{2j+1}$ and the intersection of its boundary with the boundary
of $R_0(a)$, minus the two extremal points of this intersection.
\end{itemize}
As for the set $R_{0,-}(a)$, it is the disjoint union of the
following subsets:
\begin{itemize}
\item[--]
Let $B'_{0,-}$ be the union of the interior of the triangle $B_{0,-}$ and
the relative interior of the side that it shares with~$Q_{2m-3}$.

\item[--]
For $1\leq i\leq m-2$ let $B'_{i,-}$ be the intersection of the triangle
$B_{i,-}$ with the interior of the chamber containing $B'_{i,-}$.
That is, $B'_{i,-}$ is the union of the interior of $B_{i,-}$ and the relative interior of
one of its two equal sides.

\item[--]
Let $B'_{m-1,-}$ be the intersection of the trapezoid $B_{m-1,-}$ and $T_{2m-3}$.

\item[--]
For $0\leq j\leq m-2$ let $Q'_{2j+1}$ be the intersection of the quadrilateral
$Q_{2j+1}$ and $T_{2j+1}$.
That is, $Q'_{2j+1}$ is the union of the interior of
$Q_{2j+1}$ and the intersection of its boundary with the boundary
of $R_0(a)$, minus the two extremal points of this intersection.
\end{itemize}
We obtain that $[R_{0,+}(a)]=[R_{0,-}(a)]$ since the regions
$B'_{i,+}$ and $B'_{i,-}$ are isometric for every
$0 \leq i \leq m-1$, as are the regions
$Q_{2j}$ and $Q_{2j+1}$ for $0 \leq j \leq m-2$.
\end{proof}

\begin{lemma}
Suppose that we have $V=V_1^{(1)}\times\cdots\times V_1^{(r)}\times V_2^{(1)}\times
\cdots\times V_2^{(s)}$, where the factors of the product are pairwise
orthogonal, and that $\Hf$ is a product
$\Hf_1^{(1)}\times\cdots\times\Hf_1^{(r)}\times\Hf_2^{(1)}\times\cdots
\times\Hf_2^{(s)}$, where each $\Hf_i^{(j)}$ is a hyperplane arrangement
on $V_i^{(j)}$. Suppose further that:
\begin{itemize}
\item[(a)] If $1 \leq j \leq r$ then $V_1^{(j)}$ is $1$-dimensional, and
we have a unit vector $e^{(j)}$ in $V_1^{(j)}$
yielding the hyperplane arrangement $\Hf_1^{(j)} = \{0\}$.

\item[(b)] If $1 \leq j \leq s$ then $V_2^{(j)}$ is $2$-dimensional, and
the arrangement $\Hf_2^{(j)}$ is of type $I_2(2m^{(j)})$ for some
$m^{(j)} \geq 2$.
\end{itemize}
Let $a\in V$ and $K\in \Cf(V)$.
Suppose that $K$ is stable by the Coxeter group $W$ and contains the convex
hull of the set $\{w(a) : w\in W\}$.
Then the following two statements hold:
\begin{itemize}
\item[(i)] If $s\geq 1$ we have $P(\Hf,K+a)=0$ in $K(V)$.

\item[(ii)] If $s=0$ we have in $K(V)$ the identity
\begin{align*}
P(\Hf,K+a)
& =
\left[(0,2(a,e^{(1)})e^{(1)}]\times\cdots\times(0,2(a,e^{(r)})e^{(r)}]\right] .
\end{align*}
\end{itemize}
\label{lemma_pizza_for_A_1^rxI_2^s}
\end{lemma}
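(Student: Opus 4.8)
The plan is to combine the two structural lemmas just proved. We proceed by induction on the number $s$ of dihedral factors, treating both statements together. The key observation is that Lemma~\ref{lemma_dissection_proof_dihedral}, applied to one of the dihedral factors, reduces $s$ by one at the cost of replacing that factor $V_2^{(j)}$ by a trivial arrangement $V_2^{(j)}$ (the empty arrangement on a two-dimensional space) and of intersecting $K+a$ with a region of the form $R_{0,\pm}(a^{(j)})\times(\text{rest})$; meanwhile Lemma~\ref{lemma_Cartesian_product_with_A_1}, applied to all the $A_1$ factors at once, collapses the $r$ rank-one arrangements into the half-open parallelotope appearing in statement~(ii).

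First I would set up the induction. Write $a=(a^{(1)},\ldots,a^{(r)},b^{(1)},\ldots,b^{(s)})$ according to the product decomposition of $V$, with $a^{(j)}\in V_1^{(j)}$ and $b^{(j)}\in V_2^{(j)}$. For the base case $s=0$: apply Lemma~\ref{lemma_Cartesian_product_with_A_1} with $\Hf_0$ the empty arrangement on $V_0=\{pt\}$ (or more precisely with no $\Hf_0$ factor at all) and $\Hf_1=\Hf$ of type $A_1^r$; this is legitimate since $K$ is stable by the reflections in the hyperplanes $H_{e^{(j)}}$ (these lie in $W$). The lemma gives
\[
P(\Hf,K+a)=P\bigl(V,(K+a)\cap((0,2(a,e^{(1)})e^{(1)}]\times\cdots\times(0,2(a,e^{(r)})e^{(r)}])\bigr),
\]
and since $V$ carries the empty arrangement, the right-hand side is just the class of $(K+a)\cap Q$ where $Q=\prod_i(0,2(a,e^{(i)})e^{(i)}]$. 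It remains to see that $Q\subset K+a$, equivalently $Q-a\subset K$: but $Q-a=\prod_i(-( a,e^{(i)}), (a,e^{(i)})]e^{(i)}$ up to reindexing of signs, which is contained in the convex hull of the $2^r$ vertices $(\pm(a,e^{(1)}),\ldots,\pm(a,e^{(r)}))$, and these vertices are exactly the points $w(a)$ for $w$ in the Coxeter group $A_1^r$ (up to closure of one face); since $K$ contains that convex hull, we are done, giving statement~(ii).

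For the inductive step, suppose $s\geq 1$. Single out the last dihedral factor, writing $V=V'\times V_2^{(s)}$ with $V'=V_1^{(1)}\times\cdots\times V_2^{(s-1)}$, and correspondingly $\Hf=\Hf'\times\Hf_2^{(s)}$. Apply Lemma~\ref{lemma_dissection_proof_dihedral}(ii) with the roles of $(V_0,V_1)$ there played by $(V_2^{(s)},V')$ — note the indices are swapped relative to our setup, so I would first invoke the evident symmetry of Lemma~\ref{lemma_dissection_proof_dihedral} in the two factors, or re-derive it with the factors in this order. This yields
\[
P(\Hf,K+a)=P(\Hf'\times V_2^{(s)},L\cap(V'\times R_{0,+}(b^{(s)})))-P(\Hf'\times V_2^{(s)},L\cap(V'\times R_{0,-}(b^{(s)}))),
\]
where $L=K+a$ and $R_{0,\pm}=R_{0,\pm}(b^{(s)})\subset V_2^{(s)}$. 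Now $\Hf'\times V_2^{(s)}$ is again a product of the same shape but with $s-1$ dihedral factors (the factor $V_2^{(s)}$ now carries the empty arrangement and can be absorbed into the ``$V'$ part'' as extra space on which nothing acts); so we would like to apply the inductive hypothesis. The obstacle — and this is the main point — is that the inductive hypothesis requires the set being intersected to be stable under the Coxeter group of $\Hf'$ and to contain the convex hull of the orbit of its translation parameter. Stability is fine: $K$ is $W$-stable, $R_{0,\pm}(b^{(s)})$ is stable under $W_2^{(s)}$ by construction (it is a union of chambers of a $W_2^{(s)}$-stable region intersected with chamber cones), and the groups acting on different factors commute; so $L\cap(V'\times R_{0,\pm})$ is stable under the Coxeter group $W'$ of $\Hf'$. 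The containment condition is more delicate: the relevant translation parameter for the $\Hf'$-arrangement is still the projection of $a$ to $V'$, and we need $K\cap(V'\times R_{0,\pm})$, after translating, to contain the convex hull of the $W'$-orbit of that projection. This follows because $R_{0,\pm}(b^{(s)})$ contains the point $b^{(s)}$ (it contains $0$ in $V_2^{(s)}$ translated appropriately — recall $R_0(b^{(s)})$ is the convex hull of $W_{b^{(s)}}(0)$ and the relevant chamber closures meet it), so the convex hull of the $W'$-orbit of $a$, which lies in $\mathrm{conv}\{w(a):w\in W\}\subset K$ and projects into $R_{0,\pm}$ on the last factor, is contained in $K\cap(V'\times R_{0,\pm})$. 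Granting this, the inductive hypothesis applies to each of the two terms.

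Finally I would assemble the conclusion. If $s-1\geq 1$, i.e. $s\geq 2$, the inductive hypothesis says each of the two terms $P(\Hf'\times V_2^{(s)},L\cap(V'\times R_{0,\pm}))$ vanishes, so $P(\Hf,K+a)=0$, proving~(i). If $s=1$, then $\Hf'$ has type $A_1^r$ and the inductive hypothesis (statement~(ii)) gives each term as $[(0,2(a,e^{(1)})e^{(1)}]\times\cdots\times(0,2(a,e^{(r)})e^{(r)}]\times R_{0,\pm}(b^{(1)})]$; these two classes are equal by Lemma~\ref{lemma_dissection_proof_dihedral}(iii) (applied with $K_1=\prod_i(0,2(a,e^{(i)})e^{(i)}]$, using Remark~\ref{remark_[C]_for_more_general_C} to make sense of the half-open factor and Lemma~\ref{lemma_product} to take the product of the identity $[R_{0,+}]=[R_{0,-}]$ with the identity on the parallelotope), so their difference is $0$, again proving~(i). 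The main obstacle, as flagged above, is bookkeeping the hypotheses of the inductive step — specifically verifying the convex-hull containment for the intersected sets — and correctly matching the index conventions of Lemma~\ref{lemma_dissection_proof_dihedral} (which puts the dihedral factor first) with the product ordering used in the statement of this lemma.
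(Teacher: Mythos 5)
Your overall strategy is essentially the paper's: reduce via Lemmas~\ref{lemma_Cartesian_product_with_A_1} and~\ref{lemma_dissection_proof_dihedral}(ii) to classes of product regions, use the convex-hull hypothesis to drop the intersection with $L=K+a$, and finish with Lemma~\ref{lemma_dissection_proof_dihedral}(iii) and the vanishing of $\sum\epsilon_1\cdots\epsilon_s$. But packaging it as an induction on $s$ creates a genuine gap exactly at the point you flag. The sets $L\cap(V'\times R_{0,\pm}(b^{(s)}))$ do \emph{not} satisfy the hypotheses of the inductive statement. The group $W'$ acts trivially on $V_2^{(s)}$, so the $W'$-orbit of the translation parameter has constant $V_2^{(s)}$-component; after untranslating, the single point of $V_2^{(s)}$ that would have to lie in $R_{0,\pm}(b^{(s)})$ is either $0$ (if you take the parameter to be the projection of $a$ to $V'$), which lies on every line of $\Hf_2^{(s)}$ and hence in neither region, or $2b^{(s)}$ (if you keep $a$ itself), which is a vertex of $R_0(b^{(s)})$ lying in the closure of a single chamber and hence in at most one of the two regions. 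Since $R_{0,+}$ and $R_{0,-}$ are intersections of $R_0$ with unions of \emph{open} chambers of opposite signs, your claim that ``$R_{0,\pm}(b^{(s)})$ contains the point $b^{(s)}$'' fails for at least one sign. Relatedly, in the $s=1$ step you quote statement~(ii) of the inductive hypothesis as giving a parallelotope \emph{times} $R_{0,\pm}(b^{(1)})$, but statement~(ii) yields only the parallelotope; the extra factor is not part of its conclusion, and the arrangement $\Hf'\times V_2^{(s)}$, with a trivial two-dimensional factor, is not of the form covered by the lemma in the first place.

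The repair is to not induct on the lemma itself. Apply Lemma~\ref{lemma_Cartesian_product_with_A_1} once to the $A_1^r$ part and Lemma~\ref{lemma_dissection_proof_dihedral}(ii) once per dihedral factor to get
$P(\Hf,L)=\sum_{\epsilon\in\{\pm1\}^s}\epsilon_1\cdots\epsilon_s\,
\bigl[L\cap\bigl(S^{(1)}\times\cdots\times S^{(r)}\times R^{(1)}_{\epsilon_1}\times\cdots\times R^{(s)}_{\epsilon_s}\bigr)\bigr]$,
and then verify the containment \emph{once}, for the full product region: it sits inside $\prod_i[0,2(a,e^{(i)})e^{(i)}]\times\prod_j R_0(b^{(j)})$, which is precisely the convex hull of $\{u(0):u\in W_a\}$ and hence lies in $L$. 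This is the only place the convex-hull hypothesis is needed, and it is applied to the closed product region, not to the chamber-wise pieces. Once $L$ is dropped, Lemma~\ref{lemma_dissection_proof_dihedral}(iii) together with Lemma~\ref{lemma_product} makes all $2^s$ classes equal and the signs cancel for $s\geq 1$; for $s=0$ the single remaining class is the asserted half-open parallelotope, and your base-case verification of that containment is correct.
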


\begin{proof}
Let $L=K+a$. Since $-\id_V\in W$ we have $-a\in K$ and so $0\in L+a$;
also, the sets $\{w(a) : w\in W\}$ and $\{w(-a) : w\in W\}$ are
equal.
Let $W_a$ be the group of affine isometries of
$V$ generated by the orthogonal reflections in the hyperplanes
$a+H$, for $H\in\Hf$. The conditions on $K$ imply that $L$ is
stable by $W_a$ and contains the convex hull of the set
$\{u(0) : u\in W_a\}=\{w(-a)+a : w\in W\}$.

Write $a=(a_1^{(1)},\ldots,a_1^{(r)},a_2^{(1)},\ldots,a_2^{(s)})$, with
$a_i^{(j)}\in V_i^{(j)}$. 
For $1 \leq i \leq r$ 
let $S^{(i)}$ denote the half-open line-segment $(0, 2(a,e^{(i)}) e^{(i)}]$.
For $1 \leq j \leq s$
we consider subsets 
$R_{+1}^{(j)} = R_{0,+}\big(a_2^{(j)}\big)$ and
$R_{-1}^{(j)} = R_{0,-}\big(a_2^{(j)}\big)$ of $V_2^{(j)}$
as in Lemma~\ref{lemma_dissection_proof_dihedral}.
By Lemmas~\ref{lemma_Cartesian_product_with_A_1}
and~\ref{lemma_dissection_proof_dihedral}(ii),
we have that
\begin{align*}
P(\Hf,L)
& =
\sum_{(\epsilon_1,\ldots,\epsilon_s)\in\{\pm 1\}^s}
\epsilon_1\cdots\epsilon_s
\left[L \cap \left(
S^{(1)} \times\cdots\times S^{(r)} \times
R_{\epsilon_1}^{(1)}\times\cdots\times R_{\epsilon_s}^{(s)}\right)\right] .
\end{align*}
\vanish{
and, for $s=0$, that
\begin{align*}
P(\Hf,L)
& =
\left[L \cap \left([0,2(a,e^{(1)})]\times\cdots\times[0,2(a,e^{(r)})]\right)\right],
\end{align*}
noting that $(a,e^{(j)})=(a_1^{(j)},e^{(j)})$ for every
$1 \leq j \leq r$.
}
Consider the polyhedron
\[
P =
[0,2 (a,e^{(1)}) e^{(1)}]
\times\cdots\times
[0,2 (a,e^{(r)}) e^{(r)}]
\times
R_0\big(a_2^{(1)}\big)
\times\cdots\times
R_0\big(a_2^{(s)}\big).\]
Then $P$ is the convex hull of the set $\{u(0) : u\in W_a\}$ 
by
definition of the polygons $R_0\big(a_2^{(j)}\big)$,
hence it is contained in $L$ and so is its subset
$S^{(1)} \times\cdots\times S^{(r)} \times
R_{\epsilon_1}^{(1)}\times\cdots\times R_{\epsilon_s}^{(s)}$ for every
$(\epsilon_1,\ldots,\epsilon_s)\in\{\pm 1\}^s$.
So we obtain
\begin{align*}
P(\Hf,L)
& =
\sum_{(\epsilon_1,\ldots,\epsilon_s)\in\{\pm 1\}^s}
\epsilon_1\cdots\epsilon_s
\left[S^{(1)} \times\cdots\times S^{(r)} \times
R_{\epsilon_1}^{(1)}\times\cdots\times R_{\epsilon_s}^{(s)}\right] .
\end{align*}
\vanish{
and that, for $s=0$,
\begin{align*}
P(\Hf,L)
& =
\left[[0,2(a,e^{(1)})]\times\cdots\times[0,2(a,e^{(r)})]\right].
\end{align*}
}
If $s=0$ this implies statement~(ii).
Suppose that $s\geq 1$.
By point (iii) of Lemma~\ref{lemma_dissection_proof_dihedral},
we know that
\begin{align*}
\left[S^{(1)} \times\cdots\times S^{(r)} \times
R_{\epsilon_1}^{(1)}\times\cdots\times R_{\epsilon_s}^{(s)}\right] 
= \: 
\left[S^{(1)} \times\cdots\times S^{(r)} \times
R_{+1}^{(1)}\times\cdots\times R_{+1}^{(s)}\right]
\end{align*}
\vanish{
\begin{align*}
&
\left[[0,2 (a,e^{(1)}) e^{(1)}] \times\cdots\times [0,2 (a,e^{(r)}) e^{(r)}] \times
R_{\epsilon_1}^{(1)}\times\cdots\times R_{\epsilon_s}^{(s)}\right] \\
= \: & 
\epsilon_1\cdots\epsilon_s
\left[[0,2 (a,e^{(1)}) e^{(1)}]\times\cdots\times[0,2 (a,e^{(r)}) e^{(r)}]\times
R_{+1}^{(1)}\times\cdots\times R_{+1}^{(s)}\right]
\end{align*}
}
for every $(\epsilon_1,\ldots,\epsilon_s)\in\{\pm 1\}^s$. 
As $\sum_{(\epsilon_1,\ldots,\epsilon_s)\in\{\pm 1\}^s}
\epsilon_1 \cdots \epsilon_s = 0$, this
finishes the proof of~(i).
\end{proof}

\begin{proof}[Proof of Theorem~\ref{theorem_pizza_dissection}.]
Statement~(ii) is exactly Lemma~\ref{lemma_pizza_for_A_1^rxI_2^s}(ii).
We now prove statement~(i), so we assume that $\Hf$ is not of type
$A_1^{n}$.
By Theorem~\ref{thm_pizza_abstract}, we have
\[P(\Hf,K+a)=\sum_{\varphi\in\Tt(\Phi)}\epsilon(\varphi)
P(\Hf_\varphi,K+a) . \]
By definition, any $2$-structure for $\Phi$ is of
type $A_1^r\times\prod_{k\geq 2}I_2(2^k)^{s_k}$
with
$\sum_{k\geq 2}s_k$ finite
and, as $W$ acts transitively on the set
of $2$-structures 
(Proposition~\ref{proposition_2_structures}(i)),
the integers $r$ and $s_k$, for $k\geq 2$, do not
depend on the $2$-structure but only on $\Phi$.
Also, by Proposition~\ref{proposition_2_structures}(ii), 
we have $\dim V=r+\sum_{k\geq 2}2s_k$, so we are in the situation
of Lemma~\ref{lemma_pizza_for_A_1^rxI_2^s}.
Suppose that $\sum_{k\geq 2}s_k\geq 1$. 
Then by Lemma~\ref{lemma_pizza_for_A_1^rxI_2^s}(i)
we have $P(\Hf_\varphi,K+a)=0$ for every $\varphi\in\Tt(\Phi)$
and hence $P(\Hf,K+a)=0$.
Assume now that $\sum_{k\geq 2}s_k=0$, that is, $s_{k} = 0$ for every $k$.
Statement~(ii) of the same lemma implies that
\begin{align}
P(\Hf,K+a)=\sum_{\varphi\in\Tt(\Phi)}\epsilon(\varphi)
\Bigg[\prod_{e\in\varphi\cap\Phi^+}(0,2(a,e)e]\Bigg].
\label{equation_sunshine}
\end{align}
This is an alternating sum of classes of half-open
rectangular parallelotopes in $V$.
So we can apply Theorem~\ref{thm_Bolyai-Gerwien_plus_epsilon} to
prove that $P(\Hf,K+a)=0$ in $K(V)$. We know that
$V_i(P(\Hf,K+a))=0$ if $0\leq i\leq n-1$ by
Lemma~\ref{lemma_intrinsic_volume_half_open_parallelotope},
so it remains to prove that $V_n(P(\Hf,K+a))=0$, that is,
that the alternating sum of the volumes of the parallelotopes
$\prod_{e\in\varphi\cap\Phi^+}(0,2(a,e)e]$ is equal to zero.
This follows from
Theorem~1.2 of the paper~\cite{EMR_pizza}.
However, we now give a direct
proof (that does not use analysis)
using the method of that corollary. 
Let $f:V\fl\R$ be the function defined by
\begin{align*}
f(a)
& =
\sum_{\varphi\in\Tt(\Phi)}\epsilon(\varphi) \prod_{e\in\varphi\cap\Phi^+} 2(a,e) .
\label{equation_sunshine}
\end{align*}
Note that $f$ is a polynomial homogeneous of degree $n$ on~$V$.
Furthermore equation~\eqref{equation_sunshine}
implies that
\[\Vol(P(\Hf,K+a))  
=f(a)\]
for every convex subset $K$ of $V$ of finite volume
that is stable by $W$ and every $a\in V$ such that $0\in K+a$.
The polynomial~$f$ satisfies $f(w(a))=\det(w)f(a)$ for every
$w\in W$ and every $a\in V$ (this is easy to see;
see, for example, Corollary~2.3 of~\cite{EMR_pizza}),
so it vanishes on every hyperplane of $\Hf$. But if $f\not=0$, then
the vanishing set of $f$ must be of degree at most $n$, 
which contradicts
the fact that, as $\Hf$ is not of type~$A_1^{n}$, we have $|\Hf|>n$.
Hence we must have $f=0$, and this gives the desired result.
\end{proof}

\begin{remark}
In the paper~\cite{HH}, 
J., M.\ D., J.\ K., A.\ D.\ and P.\ M.\ Hirschhorn proved that if
a circular pizza is cut into $4m$ slices by $2m$ cuts at equal angles
to each other and if $m$ people share the pizza by each taking every
$m$th slice then they receive equal shares. 
If $m=4$, Frederickson gives a dissection-based proof of this
fact on page~32 of~\cite{Fred}, and Proposition~9.1
of~\cite{EMR_pizza} generalizes the result to pizzas of more general
shapes. We cannot lift this result to the group~$K(V)$, because it
does not hold in that group. For example, if we consider the pizza
of Figure~\ref{figure_dissection_dihedral_2}, then it is not true
in general that the sums of the perimeters of the pizza pieces in
all the shares will be equal.

However, we can lift the generalization of the Hirschhorns's result to
the quotient $K_0(V)$ of the group $K(V)$ by the subgroup generated
by all the elements $[C]$ with $C\in\Cf(V)$ contained in a line of $V$.
More precisely, consider a Coxeter arrangement of type $I_2(2m)$ in
$\R^2$ with $m$ even and let~$W$ be the 
Coxeter group of this arrangement.
Let $T_0,\ldots,T_{4m-1}$ be the chambers of $\Hf$, indexed so that
that $T_i$ and $T_{i+1}$ share a wall.
Let $K\in \Cf(V)$ and $a\in V$. Suppose
that $K$ is stable by~$W$ and contains the convex hull of the
set $\{w(a) : w\in W\}$. Then the quantity
\[\sum_{i=0}^3[T_{r+mi}\cap (K+a)] \in K_0(V)\]
is independent of $0 \leq r \leq m-1$. 
(This implies the Hirschhorns's result even in the case where $k$
is odd: Just apply the previous statement with $m=2k$, and share the pizza
between $k$ people by giving the $p$th person the eight slices contained
in the chambers $T_{2p+mi}$ and $T_{2p+1+mi}$, for $0 \leq i \leq 3$.)

Let us prove this result.
Let $0 \leq r \leq m-2$.
We want to show that
\[\sum_{i=0}^3[T_{r+im}\cap(K+a)]=
\sum_{i=0}^3[T_{r+1+im}\cap(K+a)]\]
in $K_0(V)$.
Suppose that we know that
\begin{equation}\sum_{i=0}^3[T_{r+im}\cap R_+(a)\cap(K+a)]=
\sum_{i=0}^3[T_{r+1+im}\cap R_-(a)\cap(K+a)]
\label{equation_half_result}
\end{equation}
in $K_0(V)$, where $R_{+}(a)$ and $R_{-}(a)$ are given by
\[R_\pm(a)=(V-R_0(a))\cap\bigcup_{\substack{T\in\Tc(\Hf) \\ (-1)^T=\pm 1}} T.\]
Then it remains to see that
\[\sum_{i=0}^3[T_{r+im}\cap R_0(a)\cap(K+a)]=
\sum_{i=0}^3[T_{r+1+im}\cap R_0(a)\cap(K+a)]\]
in that same quotient.
But now all the regions appearing in the sums are polygons, so
the equality of the sums of their classes in $K_0(V)$ is equivalent
to the equality of the sums of their areas, by the
Bolyai-Gerwien Theorem; see~\cite[Section~5]{Bolt}. This last
equality follows either from Proposition~9.1 of~\cite{EMR_pizza},
or from the Hirschhorns's result. (The Hirschhorns only consider the
case of a circular pizza, but, by equation~\eqref{equation_half_result}, their
result for a circular pizza implies the result for the polygonal pizza
$R_0(a)$.)

We first suppose that $r$ is even.
To prove equation~\eqref{equation_half_result},
we suppose that $T_0,\ldots,T_{2m-1}$ denote the same
chambers as in the proof of
Lemma~\ref{lemma_dissection_proof_dihedral},
and we use the notation of that proof. 
In particular, for
$0\leq i\leq 2m-1$, the chamber~$T_{2m+i}$ is equal to the
region $R_{i,\epsilon}$, where $\epsilon$ is the sign $(-1)^{i}$. 
We have
$T_r\cap R_+(a)=S_{r/2,+}\cup
R_{r+1,+}$, $T_{r+m}\cap R_+(a)=S_{(r+m)/2,+}\cup R_{r+m+1,+}$,
$T_{r+2m}\cap R_+(a)=T_{r+2m}=R_{r,+}$ and $T_{r+3m}\cap R_+(a)=
T_{r+3m}=R_{r+m,+}$. On the other hand, we have
$T_{r+1}\cap R_-(a)=S_{r/2,-}\cup
R_{r,-}$, $T_{r+1+m}\cap R_-(a)=S_{(r+m)/2,-}\cup R_{r+m,-}$,
$T_{r+1+2m}\cap R_-(a)=T_{r+1+2m}=R_{r+1,-}$ and $T_{r+1+3m}\cap R_-(a)=
T_{r+1+3m}=R_{r+m+1,-}$. This implies equation~\eqref{equation_half_result}.

We now consider the case where $r$ is odd.
We again suppose that $T_0,\ldots,T_{2m-1}$ denote the same
chambers as in the proof of
Lemma~\ref{lemma_dissection_proof_dihedral} and the notation of that lemma, 
but we use a different
dissection, that is illustrated in Figure~\ref{figure_dissection_dihedral_3}
in the case $m=4$.
\begin{figure}
\centering
\includegraphics[width=\textwidth]{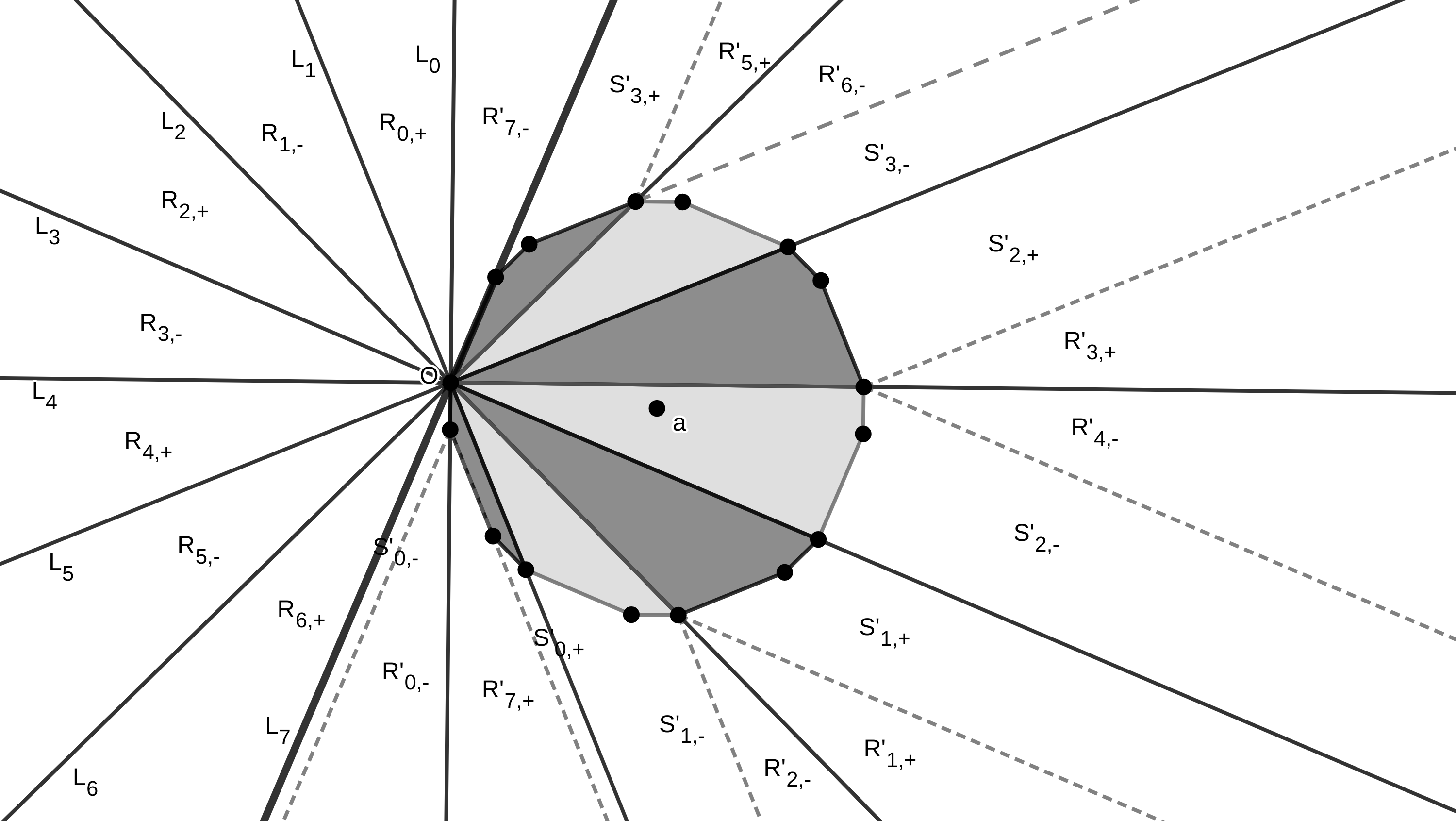}
\caption{
The dissection that we use for odd $r$ in 
the case of $I_2(8)$.}
\label{figure_dissection_dihedral_3}
\end{figure}
For $0\leq i\leq 2m-2$, we consider the same region
$R_{i,\pm}$ as
in the proof of Lemma~\ref{lemma_dissection_proof_dihedral}, but
we denote by $R'_{2m-1,-}$ the chamber $T_{2m-1}$.
For every $1 \leq j \leq m-1$, we denote
by $R'_{2j-1,+}$ (respectively, $R'_{2j,-}$) the orthogonal reflection of 
$R_{2j,+}$ (respectively, $R_{2j+1,-}$) in the line $L_{2j}^\perp+a$; 
note that $R'_{2j-1,+}\subset T_{2j}$ and $R'_{2j,-}\subset T_{2j-1}$.
We also denote
by $R'_{0,-}$ (respectively, $R'_{2m-1,+}$) the orthogonal reflection of 
$R_{0,+}$ (respectively, $R'_{2m-1,-}$) in the line $L_{m}^\perp+a$.
For $1 \leq j \leq m-1$ again, we denote by
$S'_{j,-}$ the interior of $T_{2j-1} -(R_0(a)\cup R'_{2j,-})$
and by $S'_{j,+}$ the interior of
$T_{2j} - (R_0(a)\cup R'_{2j-1,+})$.
Finally, we denote by $S'_{0,-}$ the interior of
$T_{4m-1}-(R_0(a)\cup R'_{0,-})$ and $S'_{0,+}$ the interior of
$T_0-(R_0(a)\cup R'_{2m-1,+})$.

For
$0\leq i\leq 2m-2$, the chamber~$T_{2m+i}$ is then equal to the
region $R_{i,\epsilon}$, where $\epsilon$ is the sign $(-1)^{i}$;
also, the chamber $T_{2m-1}$ is equal to $R'_{2m-1,-}$. 
We have
$T_r\cap R_-(a)=S_{(r+1)/2,-}\cup
R_{r+1,-}$, $T_{r+m}\cap R_-(a)=S_{(r+m+1)/2,-}\cup R_{r+m+1,-}$,
$T_{r+2m}\cap R_-(a)=T_{r+2m}=R_{r,-}$ and $T_{r+3m}\cap R_-(a)=
T_{r+3m}=R_{r+m,-}$. On the other hand, we have
$T_{r+1}\cap R_+(a)=S_{(r+1)/2,+}\cup
R_{r,+}$, $T_{r+1+m}\cap R_+(a)=S_{(r+1+m)/2,+}\cup R_{r+m,+}$,
$T_{r+1+2m}\cap R_+(a)=T_{r+1+2m}=R_{r+1,+}$ and $T_{r+1+3m}\cap R_+(a)=
T_{r+1+3m}=R_{r+m+1,+}$. This implies equation~\eqref{equation_half_result}.

As in Frederickson's article, there should also
be a dissection-based proof of the equality of areas
that we use to finish the proof, but we
were not courageous enough to look for it.
\end{remark}

\section{The Bolyai-Gerwien Theorem}
\label{section_Bolyai-Gerwien_plus_epsilon}

The classical Bolyai-Gerwien Theorem states that two polygons are
scissors congruent if and only if they have the same area. There is also
a well-known generalization in higher dimensions that applies
to parallelotopes; it follows from the characterization of translational
scissors congruences in arbitrary dimensions, and was proved independently
by Jessen-Thorup and Sah; see 
the beginning of Section~7 of~\cite{Jessen_Thorup} or
Theorem~1.1 in Chapter~4 of~\cite{Sah}. 
In this section, we state a slight refinement of this
generalization, Theorem~\ref{thm_Bolyai-Gerwien_plus_epsilon},
that keeps track of lower-dimensional
faces; in other words, we do not want to ignore the boundaries.

As in the previous sections, let $V$ be an $n$-dimensional real
vector space with an inner product $(\cdot,\cdot)$.
If $(v_1,\ldots,v_r)$ is a linearly independent list of elements of $V$,
we define the parallelotope
\[P(v_1,\ldots,v_r)=\left\{\sum_{i=1}^r a_i v_i : 0\leq a_i\leq 1\right\}.\]
We denote by
$\Pf(V)$ the set of all convex polytopes in $V$ (including lower-dimensional
ones) and by
$\Zf(V)$ the subfamily of polytopes that are translates of parallelotopes
of the form $P(v_1,\ldots,v_r)$.
The set $\Pf(V)$ satisfies the
conditions of Definition~\ref{def_K(V)}, 
so we can define an abelian group
$K_\Pf(V)$ as in that definition. We denote by
$K_\Zf(V)$ the subgroup of $K_\Pf(V)$ generated by the classes~$[P]$
for $P\in\Zf(V)$.
Remark~\ref{remark_[C]_for_more_general_C} implies that, if
$\Pfe(V)$ is 
the relative Boolean algebra generated
by $\Pf(V)$, 
then we can define the class
$[P]$ in $K_\Pf(V)$
of any element $P$ in
$\Pfe(V)$.
We denote by $\Zfe(V)$ the set of elements $P$ of $\Pfe(V)$ such that
$[P]\in K_\Pf(V)$ is in the subgroup $K_\Zf(V)$. For example,
the set $\Zfe(V)$ contains $\Zf(V)$, and it also contains all half-open
parallelotopes.

Recall $V_0,\ldots,V_n$ denotes the intrinsic volumes on $V$;
see~\cite[Section~4.2]{Schneider}. These are valuations on the set of
all compact convex subsets of $V$, and in particular on
$\Pf(V)$, so they induce morphisms of
groups from $K_\Pf(V)$ to $\R$, which we still denote
by $V_0,\ldots,V_n$. Note that $V_0$ is the Euler-Poincar\'e characteristic
with compact support,
so the image of $K_\Pf(V)$ is~$\Z$.

The main result of this section is the following isomorphism.

\begin{theorem}
The morphism $(V_0,V_1,\ldots,V_n):K_\Zf(V)\fl\Z\times\R^n$ is an
isomorphism.
In particular, if $P,P'\in
\Zfe(V)$ are such that $V_i(P)=V_i(P')$ for every $0 \leq i \leq n$,
then $[P]=[P']$ in $K_\Zf(V)$.
\label{thm_Bolyai-Gerwien_plus_epsilon}
\end{theorem}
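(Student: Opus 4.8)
The plan is to prove Theorem~\ref{thm_Bolyai-Gerwien_plus_epsilon} by exhibiting an explicit inverse to the morphism $(V_0,\ldots,V_n)$ on the generators of $K_\Zf(V)$. First I would observe that the surjectivity half is the easy direction: for each $0\le r\le n$ one can pick a coordinate $r$-dimensional cube of side length $t$, whose $i$th intrinsic volume is $\binom{r}{i}t^i$, and by letting $t$ vary and taking $\Z$-linear combinations of such cubes (and of the point, which gives the generator of the $\Z$-factor) one hits every element of $\Z\times\R^n$. So the real content is \emph{injectivity}, i.e.\ that the classes of translates of $P(v_1,\ldots,v_r)$ in $K_\Pf(V)$ are completely determined by their $n+1$ intrinsic volumes.

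For injectivity, the key step is a normal form: I claim that in $K_\Zf(V)$ every class $[P(v_1,\ldots,v_r)]$ equals an integer combination of classes of \emph{rectangular} (coordinate-axis-aligned) boxes, and moreover the volume in each dimension can be concentrated into a single box of each dimension $0,1,\ldots,n$. The mechanism is the classical shearing/cut-and-paste argument behind the Bolyai--Gerwien theorem, but carried out with bookkeeping of the lower-dimensional faces. Concretely: (a) translation-invariance is built into $K(V)$, so we may put one vertex of the parallelotope at the origin; (b) a parallelepiped $P(v_1,\ldots,v_r)$ can be sheared parallel to the hyperplane $\Span(v_1,\ldots,v_{r-1})$ into a right prism over $P(v_1,\ldots,v_{r-1})$ of the same $r$-volume --- this is an inclusion--exclusion identity $[P]+[\text{small simplexlike correction}]=[P']+[\text{its translate}]$ valid in $K_\Pf(V)$, and the correction pieces are handled by induction on $r$; (c) iterating, $P(v_1,\ldots,v_r)$ is equivalent in $K_\Zf(V)$ to a rectangular box $B$ of the same edge lengths up to an integer combination of strictly lower-dimensional boxes; (d) finally two rectangular boxes of the same dimension $r$ and the same $r$-volume have the same class modulo lower-dimensional boxes, again by the standard one-cut rectification of a rectangle (applied coordinate plane by coordinate plane), so all the $r$-dimensional content can be pushed into a \emph{single} standard box $Q_r(c_r)$ of $r$-volume $c_r$. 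The outcome is that $K_\Zf(V)$ is generated, as an abelian group, by the classes $[Q_r(c)]$ for $0\le r\le n$ and $c\in\R$ (with $c\in\R_{>0}$, plus the point for $r=0$), subject to $[Q_r(c)]+[Q_r(c')] = [Q_r(c+c')] + (\text{lower-dimensional correction})$.

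With this normal form in hand, injectivity follows by a downward induction on dimension: suppose $x\in K_\Zf(V)$ has $V_i(x)=0$ for all $i$. Write $x$ in the normal form as $\sum_{r=0}^n n_r[Q_r(c_r)]$ plus lower-order terms absorbed recursively; computing $V_n$ kills everything but the top box and forces its $n$-volume to vanish, hence that box is lower-dimensional and can be discarded; then $V_{n-1}$ does the same one dimension down, and so on, until at the bottom $V_0$ (the compactly-supported Euler characteristic) forces the integer count of points to be zero. This shows $x=0$, completing injectivity and hence the theorem; the ``in particular'' clause is then immediate since $[P]-[P']\in K_\Zf(V)$ for $P,P'\in\Zfe(V)$ by definition of $\Zfe(V)$, and it is killed by every $V_i$. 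I expect the main obstacle to be step (b)--(c): making the shearing identities precise \emph{as identities in $K_\Pf(V)$ rather than just up to sets of measure zero}, since Frederickson-style dissection arguments routinely ignore boundary pieces, whereas here every lower-dimensional face must be tracked and shown to lie in $K_\Zf(V)$ --- this is exactly the ``plus epsilon'' refinement the section title alludes to, and it is what makes the inductive structure on dimension both necessary and slightly delicate.
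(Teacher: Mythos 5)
Your proposal is correct and follows essentially the same route as the paper: reduce every parallelotope class, via translation-only Bolyai--Gerwien shearing lifted to $K_\Pf(V)$ with explicit bookkeeping of the lower-dimensional overlap pieces, to a standard box of the same volume modulo lower-dimensional parallelotopes, identify $\Ker V_n$ with that lower-dimensional subgroup, and then descend through the dimensions. The only cosmetic differences are that the paper uses half-open boxes (whose lower intrinsic volumes vanish by Lemma~\ref{lemma_intrinsic_volume_half_open_parallelotope}) to make surjectivity and the triangular structure of the argument immediate, performs all shearing in $2$-dimensional slices combined with a product lemma rather than directly in rank $r$, and organizes the descent as an induction on $\dim V$ by passing to a hyperplane rather than as a downward sweep through $V_n,\ldots,V_0$ inside $V$.
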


We will give the proof of the theorem at the end of this section.

\begin{corollary}
Suppose that $\dim(V)=2$. Then the triple $(V_0,V_1,V_2)$ induces an isomorphism
from $K_\Pf(V)$ to $\Z\times\R^2$. In particular, if $P,P'$ are two elements
of $\Pfe(V)$, then their classes in $K_\Pf(V)$ are equal if and only
if $V_i(P)=V_i(P')$ for all $0 \leq i \leq 2$.
\label{cor_Bolyai-Gerwien_plus_epsilon}
\end{corollary}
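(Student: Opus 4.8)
The strategy is to reduce the two-dimensional case to Theorem~\ref{thm_Bolyai-Gerwien_plus_epsilon} by showing that, when $\dim(V)=2$, the inclusion $K_\Zf(V)\hookrightarrow K_\Pf(V)$ is actually an equality. Granting this, the triple $(V_0,V_1,V_2)\colon K_\Pf(V)\to\Z\times\R^2$ coincides with the map of Theorem~\ref{thm_Bolyai-Gerwien_plus_epsilon}, which is an isomorphism, and the statement about elements of $\Pfe(V)$ follows immediately (the "in particular" part of Theorem~\ref{thm_Bolyai-Gerwien_plus_epsilon} literally transports once we know $\Pfe(V)$ maps into $K_\Zf(V)=K_\Pf(V)$). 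So the whole content is the claim $K_\Zf(V)=K_\Pf(V)$ in dimension $2$.

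To prove that, it suffices to show that the class $[P]$ of every convex polygon $P$ lies in the subgroup $K_\Zf(V)$ generated by classes of translated parallelotopes (in dimension $2$, rectangles, together with segments and points as the degenerate cases $r=1$ and $r=0$). First I would handle the degenerate cases: a point is $P(\,)$ with $r=0$, so $[{\rm pt}]\in K_\Zf(V)$; a segment $[u,v]$ is a translate of $P(v-u)$, so $[[u,v]]\in K_\Zf(V)$. For a full-dimensional convex polygon, triangulate $P$ into triangles, so by the inclusion--exclusion relations defining $K_\Pf(V)$ it is enough to treat a single triangle. Now invoke the classical Bolyai--Gerwien theorem: any triangle is scissors-congruent to a rectangle, i.e. one can cut the triangle into finitely many polygonal pieces and reassemble them (via affine isometries) into a rectangle. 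Each such cut and reassembly is exactly an application of the inclusion--exclusion relation $[A\cup B]+[A\cap B]=[A]+[B]$ together with the isometry-invariance relation $[g(C)]=[C]$, and the pieces along the cuts are lower-dimensional polygons whose classes we have already shown lie in $K_\Zf(V)$. Hence $[P_{\text{triangle}}]$ differs from $[{\text{rectangle}}]$ by an element of $K_\Zf(V)$, and since the rectangle is a translate of some $P(v_1,v_2)$ its class is in $K_\Zf(V)$ as well; therefore $[P_{\text{triangle}}]\in K_\Zf(V)$, and summing over the triangulation gives $[P]\in K_\Zf(V)$.

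The one point requiring care—and the place I would expect to spend the most effort—is the bookkeeping of lower-dimensional pieces in the scissors-congruence argument, since we are working in $K_\Pf(V)$ rather than merely tracking areas: when we write $[A\cup B]=[A]+[B]-[A\cap B]$ for two polygonal pieces sharing an edge, the term $[A\cap B]$ is a segment, and when we reflect or translate pieces the overlaps introduced must again be accounted for by segments and points. All of these have classes in $K_\Zf(V)$ by the degenerate-case discussion above, so the argument goes through, but one must be disciplined about it; this is precisely the kind of careful treatment the authors already carried out in the proof of Lemma~\ref{lemma_dissection_proof_dihedral}(iii) and in Theorem~\ref{thm_Bolyai-Gerwien_plus_epsilon}, and one could alternatively just cite the "refined" Bolyai--Gerwien statement as packaged there. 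Once $K_\Zf(V)=K_\Pf(V)$ is established, the corollary is immediate from Theorem~\ref{thm_Bolyai-Gerwien_plus_epsilon}.
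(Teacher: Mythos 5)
Your proposal is correct and follows essentially the same route as the paper: reduce to showing $K_\Zf(V)=K_\Pf(V)$ in dimension $2$, note that points and segments are already parallelotopes, and invoke the classical Bolyai--Gerwien theorem to handle full-dimensional polygons, with the lower-dimensional pieces created by the dissection absorbed into $K_\Zf(V)$. The paper's own proof is just a terser version of this same argument.
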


\begin{proof}
By Theorem~\ref{thm_Bolyai-Gerwien_plus_epsilon} it suffices to prove that $K_\Pf(V)=K_\Zf(V)$.
As points and segments are parallelotopes, 
it suffices to prove that every polygon $P$ is
scissors congruent to a parallelogram, which follows from
the Bolyai-Gerwien Theorem (see~\cite[Section~5]{Bolt}).
\end{proof}

\begin{remark}
\begin{itemize}
\item[(1)] If we take the quotient $K_{\Pf,0}(V)$
of $K_\Pf(V)$ by the subgroup generated
by the classes of lower-dimensional polytopes, then two polytopes
have the same class in $K_{\Pf,0}(V)$ if and only if they are
scissors congruent, and the Bolyai-Gerwien Theorem
(see~\cite[Section~5]{Bolt}), 
says that, if $\dim(V)=2$, the area $V_2$ induces an isomorphism 
from $K_{\Pf,0}(V)$ to $\R$. 

\item[(2)] If $\dim(V)\geq 3$, then $K_\Zf(V)\not= K_\Pf(V)$.
Otherwise, every element of $\Pf(V)$ 
of positive volume would be scissors congruent to an element of
$\Zf(V)$, hence to a cube, and this is not true by the negative solution
to Hilbert's third problem (see for example~\cite{Bolt}).

\end{itemize}
\end{remark}

Let $\Zf'(V)$ be the set of translates of parallelotopes of the form
$P(v_1,\ldots,v_r)$, for $(v_1,\ldots,v_r)$ a linearly independent
family of elements of $V$ such that $r\leq n-1$, and let
$K'_\Zf(V)$ be the subgroup of $K_\Zf(V)$ generated by the classes
of elements of $\Zf'(V)$. We also write $\Zfep(V)$ for the
set of $P\in\Zfe(V)$ such that $[P]\in K'_\Zf(V)$.

If $P,P'\in\Zf(V)$, we write $P\sim P'$ if
there exist 
$P_1,\ldots,P_r,Q_1,\ldots,Q_s\in\Pfe(V)$,
$a_1,\ldots,a_r$, $b_1,\ldots,b_s\in V$ and
$R,R'\in\Zfep(V)$ such that
\[P\sqcup Q_1\sqcup\cdots\sqcup Q_s=R\sqcup P_1\sqcup\cdots\sqcup P_r
\text{ and }
P'\sqcup(Q_1+b_1)\sqcup\cdots\sqcup(Q_s+b_s)=R'\sqcup(P_1+a_1)\sqcup\cdots
\sqcup(P_r+a_r).\]
It is not hard to see that this is an equivalence relation, and that
equivalent parallelotopes have the same volume.

\begin{lemma}
Let $W,W'$ be subspaces of $V$ such that $V=W\times W'$.
We do not assume that $W$ and~$W'$ are orthogonal.
Let $P,P'\in\Zf(W)$ and $S,S'\in\Zf(W')$ such that
$P\sim P'$ and $S\sim S'$.
Then the relation $P\times S\sim P'\times S'$ holds.
\label{lemma_product_decomposition}
\end{lemma}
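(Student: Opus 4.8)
The plan is to reduce everything to a single application of $\sim_0$, working one factor at a time. Since $\sim$ is by definition the transitive closure of $\sim_0$, and since one checks easily that $\sim_0$ is symmetric, it suffices to prove the following two statements: first, if $P\sim_0 P'$ in $\Zf(W)$ and $S\in\Zf(W')$ is arbitrary, then $P\times S\sim_0 P'\times S$ in $\Zf(V)$; and second, if $S\sim_0 S'$ in $\Zf(W')$ and $P\in\Zf(W)$ is arbitrary, then $P\times S\sim_0 P\times S'$ in $\Zf(V)$. Concatenating a chain of $\sim_0$-steps in the first variable (with $S$ fixed) and then a chain in the second variable (with $P'$ fixed) yields $P\times S\sim P'\times S\sim P'\times S'$, as desired. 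By symmetry of the two roles, I only need to carry out the first statement carefully.

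So suppose $P\sqcup Q_1\sqcup\cdots\sqcup Q_s=R\sqcup P_1\sqcup\cdots\sqcup P_r$ and $P'\sqcup(Q_1+b_1)\sqcup\cdots\sqcup(Q_s+b_s)=R'\sqcup(P_1+a_1)\sqcup\cdots\sqcup(P_r+a_r)$ with the $P_i,Q_j\in\Pfe(W)$, the $a_i,b_j\in W$, and $R,R'\in\Zfep(W)$. Take the Cartesian product of each disjoint-union identity with the fixed parallelotope $S\in\Zf(W')$: since Cartesian product with a fixed set preserves disjointness and commutes with finite unions and set differences, we get $(P\times S)\sqcup(Q_1\times S)\sqcup\cdots\sqcup(Q_s\times S)=(R\times S)\sqcup(P_1\times S)\sqcup\cdots\sqcup(P_r\times S)$, and likewise $(P'\times S)\sqcup\big((Q_1+b_1)\times S\big)\sqcup\cdots=(R'\times S)\sqcup\big((P_1+a_1)\times S\big)\sqcup\cdots$. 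Now I use the product decomposition $V=W\times W'$ to view $W'$ as a subspace of $V$, so that for $b\in W$ the set $(Q_j+b)\times S$ equals $(Q_j\times S)+b$, viewing $b\in W\subset V$; this is exactly the translation that the definition of $\sim_0$ requires, with the same translation vectors $a_i,b_j$, now regarded as elements of $V$. The pieces $Q_j\times S$ and $P_i\times S$ lie in $\Pfe(V)$ since $\Pf(V)$ is closed under Cartesian products (Definition~\ref{def_K(V)}) and hence so is the relative Boolean algebra it generates.

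The one genuine point to verify is that $R\times S$ and $R'\times S$ lie in $\Zfep(V)$, i.e.\ that their classes land in the ``low-dimensional'' subgroup $K'_\Zf(V)$. Here is where the dimension bookkeeping matters: $R,R'\in\Zfep(W)$ means $[R],[R']$ lie in the subgroup $K'_\Zf(W)$ of $K_\Zf(W)$ generated by translates of parallelotopes $P(v_1,\dots,v_k)$ with $k\le\dim W-1$. By Lemma~\ref{lemma_product} (the product compatibility of the class symbol), if $[R]=\sum_\ell n_\ell[P(v^{(\ell)}_1,\dots,v^{(\ell)}_{k_\ell})+c_\ell]$ with each $k_\ell\le\dim W-1$, then $[R\times S]=\sum_\ell n_\ell[P(v^{(\ell)}_1,\dots,v^{(\ell)}_{k_\ell},w_1,\dots,w_t)+(c_\ell,0)]$ where $S=P(w_1,\dots,w_t)+d$ with $t=\dim S\le\dim W'$, so each summand is the class of a translate of a parallelotope spanned by at most $(\dim W-1)+\dim W'=\dim V-1$ vectors; thus $[R\times S]\in K'_\Zf(V)$, i.e.\ $R\times S\in\Zfep(V)$, and the same for $R'\times S$. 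This is the step I expect to be the main (though modest) obstacle, since it is the only place where the specific structure of $\Zfep$ — as opposed to a purely formal product manipulation — is used; everything else is a routine transport of the disjoint-union identities across the Cartesian product with a fixed factor. Assembling these pieces shows $P\times S\sim_0 P'\times S$, completing the first statement; the second statement is identical with the roles of the two factors exchanged, and the lemma follows.
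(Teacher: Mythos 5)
Your proof is correct and follows essentially the same route as the paper's: reduce by transitivity and symmetry to the case $S=S'$, $P\sim_0 P'$, take the Cartesian product of the two disjoint-union identities with the fixed factor $S$, observe that $(Q_j+b_j)\times S=(Q_j\times S)+b_j$ with $b_j\in W\subset V$, and conclude once $R\times S,R'\times S\in\Zfep(V)$ is known. The only point where you go beyond the paper is that the paper simply asserts $R\times S\in\Zfep(V)$, while you attempt to justify it; there your appeal to Lemma~\ref{lemma_product} is slightly off target. That lemma compares single classes in an orthogonal product $V_0\times V_1$, whereas here $W$ and $W'$ need not be orthogonal, so the map $C\longmapsto C\times S$ is not obviously compatible with the isometry relations defining $K_\Pf(W)$, and passing from $[R]=\sum_\ell n_\ell[Z_\ell]$ to $[R\times S]=\sum_\ell n_\ell[Z_\ell\times S]$ is not a formal consequence of that lemma. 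The dimension count itself is exactly right: a translate of $P(v_1,\dots,v_k)$ in $W$ times a translate of $P(w_1,\dots,w_t)$ in $W'$ is a translate of $P(v_1,\dots,v_k,w_1,\dots,w_t)$ with $k+t\leq(\dim W-1)+\dim W'=\dim V-1$, so it lies in $\Zf'(V)$; the safe way to finish is to apply this at the level of sets, via inclusion–exclusion on a Boolean presentation of $R$ (in the paper's applications $R$ and $R'$ are explicit finite unions of lower-dimensional parallelotopes, e.g.\ boundaries of polygons, so such a presentation is available). This is a refinement of a step the paper leaves unproved, not a flaw in your argument's structure.
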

\begin{proof}
As $\sim$ is transitive and as $W$ and $W'$ play symmetric roles, it
suffices to treat the case where $S=S'$. We choose
$P_1,\ldots,P_r,Q_1,\ldots,Q_s\in\Pfe(W)$,
$a_1,\ldots,a_r,b_1,\ldots,b_s\in W$ and
$R,R'\in\Zfep(W)$ such that
\[P\sqcup Q_1\sqcup\cdots\sqcup Q_s=R\sqcup P_1\sqcup\cdots\sqcup P_r
\text{ and }
P'\sqcup(Q_1+b_1)\sqcup\cdots\sqcup(Q_s+b_s)
=
R'\sqcup(P_1+a_1)\sqcup\cdots\sqcup(P_r+a_r).\]
Then
\[(P\times S)\sqcup(Q_1\times S)\sqcup\cdots\sqcup(Q_s\times S)
=(R\times S)\sqcup (P_1\times S)\sqcup\cdots\sqcup 
(P_r\times S)\]
and
\[(P'\times S)\sqcup((Q_1\times S)+b_1)\sqcup\cdots\sqcup((Q_s\times S)+b_s)
=
(R'\times S)\sqcup ((P_1\times S)+a_1)\sqcup\cdots\sqcup ((P_r\times S)+a_r).\]
As $R\times S$ and $R'\times S$ are in $\Zfep(V)$, this implies that
$P\times S\sim P'\times S$.
\end{proof}

For two real numbers $a, b \in \R$
recall that the half open interval
is given by $(a,b] = \{t\in\R:a<t\leq b\}$ and
the closed interval by
$[a,b] = \{t\in\R:a\leq t\leq b\}$.

\begin{lemma}
Let $(v_1,\ldots,v_n)$ and $(w_1,\ldots,w_n)$ be bases of
$V$, and let $P=P(v_1,\ldots,v_n)$ and $P'=P(w_1,\ldots,w_n)$.
Then $V_n(P)=V_n(P')$ if and only if there exists an isometry $g$ of
$V$ such that $P\sim g(P')$.

In particular, if $V=\R^n$, then, for every $P\in\Zf(V)$, 
the classes of $P$ and of $(0,1]^{n-1}\times (0,V_n(P)]$ in $K_\Zf(V)$ are
equal modulo $K'_\Zf(V)$.

\label{lemma_rectangular_parallelotopes}
\end{lemma}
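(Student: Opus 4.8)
The plan is to prove the two directions of the first equivalence separately, and then deduce the ``in particular'' statement. For the easy direction, suppose $P\sim g(P')$ for some isometry $g$ of $V$. The relation $\sim$ is generated by $\sim_0$, and it is immediate from the definition of $\sim_0$ (and additivity of volume on the disjoint decompositions, together with the fact that the ``error terms'' $R,R'\in\Zfep(V)$ are lower-dimensional and hence have $n$-dimensional volume zero) that $\sim_0$ preserves $V_n$; hence so does $\sim$. Since $g$ is an isometry, $V_n(g(P'))=V_n(P')$, and we conclude $V_n(P)=V_n(P')$.

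For the hard direction, assume $V_n(P)=V_n(P')=:\mu>0$. I would first reduce to the case $V=\R^n$ by applying a linear isometry, so write $P=P(v_1,\dots,v_n)$. The strategy is to peel off the directions one at a time using a planar shearing argument in the spirit of Bolyai--Gerwien, carried out inside a two-dimensional slice and crossed with the remaining coordinates via Lemma~\ref{lemma_product_decomposition}. Concretely, in $\R^2$ a parallelogram with sides $u_1,u_2$ is equidissectable with a rectangle $(0,\ell]\times(0,\mu/\ell]$ having the same area, and this equidissection is built from finitely many triangles and one leftover rectangle; all the cut pieces are genuine $2$-dimensional polygons (elements of $\Pfe(\R^2)$) and the only $1$-dimensional debris that appears lies in $\Zfep(\R^2)$, so the equidissection is precisely a witness for $\sim_0$. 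Iterating: using $V=\R\times\R^{n-1}$ and Lemma~\ref{lemma_product_decomposition} with the slice $\R^2$ sitting in the first two coordinates (and the identity relation on the remaining $\R^{n-2}$), I can successively straighten the parallelotope $P$ into a rectangular box $(0,1]^{n-1}\times(0,\mu]$, and likewise straighten $P'$ into the same box; composing the two chains of $\sim_0$ steps through transitivity of $\sim$ gives $P\sim P'$, and then the general statement follows by applying the isometry $g$ that undoes the initial reduction to $\R^n$.

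The ``in particular'' statement is then essentially a restatement: taking $V=\R^n$ and $P'=(0,1]^{n-1}\times(0,V_n(P)]$, which is in $\Zfe(\R^n)$ via Remark~\ref{remark_[C]_for_more_general_C} (it is a half-open parallelotope) and has $V_n$-volume $V_n(P)$, the relation $P\sim P'$ produced above, unwound through the definition of $\sim_0$, shows that $[P]-[P']$ is an integer combination of classes $[R],[R']$ with $R,R'\in\Zfep(\R^n)$, that is, $[P]\equiv[P'] \pmod{K'_\Zf(V)}$.

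I expect the main obstacle to be bookkeeping rather than conceptual: one must check carefully that Frederickson-style planar dissections into triangles can be assembled so that every positive-dimensional piece is a convex polytope in $\Pfe$ and every lower-dimensional piece lands in $\Zfep$, so that each step is literally an instance of $\sim_0$ as defined (in particular that ``triangles'' are allowed as the $P_i$ since $\Pfe$ contains all convex polytopes), and that crossing with $\R^{n-2}$ via Lemma~\ref{lemma_product_decomposition} does not spoil the ``lower-dimensional error lies in $\Zfep$'' condition --- which it does not, since a product of a $\le(n-1)$-dimensional parallelotope slice with a lower-dimensional box is still of dimension $\le n-1$. A secondary subtlety is the degenerate case $\mu=0$ (i.e.\ $P$ or $P'$ lower-dimensional, excluded here since $v_i,w_i$ are bases, but worth a remark), and making sure the measure-zero ``equivalent implies equal volume'' aside is justified before it is used.
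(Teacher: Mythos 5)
Your proposal follows essentially the same route as the paper: the easy direction via volume-invariance of $\sim_0$ (the terms $R,R'\in\Zfep(V)$ have $V_n=0$), and the hard direction by iterating translation-only planar Bolyai--Gerwien dissections through Lemma~\ref{lemma_product_decomposition}, with one-dimensional debris absorbed into $\Zfep$. The ``bookkeeping'' you defer is exactly where the paper spends its care, so note: the two-dimensional slices must be chosen adaptively as the generally non-orthogonal spans $\Span(v_i,v_j)$ rather than fixed coordinate planes (which is precisely why Lemma~\ref{lemma_product_decomposition} does not assume the factors orthogonal), the planar dissections must move pieces by translations only since that is all $\sim_0$ permits, the target of $\sim$ has to be the closed box $[0,1]^{n-1}\times[0,V_n(P)]$ because $\sim$ is defined only on $\Zf(V)$ (the passage to the half-open box is a separate adjustment in $K'_\Zf(V)$), and the ``in particular'' statement also requires the case of lower-dimensional $P\in\Zf(V)$, where both classes already lie in $K'_\Zf(V)$.
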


\begin{proof}
We already know that $V_n(P)=V_n(P')$ if $P\sim g(P')$ with $g$ an
isometry of $V$.
We prove the converse by induction on~$n$. 
It suffices to show that, for every basis $(v_1,\ldots,v_n)$ of
$V$, there exists an orthonormal basis $(e_1,\ldots,e_n)$ of $V$ and
$a\in\R_{\geq 0}$ such that $P(v_1,\ldots,v_n)\sim 
P(e_1,\ldots,e_{n-1},a\cdot e_n)$;
we then must have 
$a=V_n(P(v_1,\ldots,v_n))$.
There is nothing to prove if $n=0$, and the claim is clear if
$n=1$.
Suppose that $n=2$. 
The classical proofs that two parallelograms that
have the same basis and the same height are scissors congruent
and that rectangles that have parallel sides and the same area
are scissors congruent
use only translations to move the pieces of the decompositions
(see for example~\cite[Proposition~35]{Euclid} and
Figure~30 on page~52 of~\cite{Bolt}; we reproduce the
relevant decompositions in Figures~\ref{figure_dissection_Bolyai-Gerwien1}
and~\ref{figure_dissection_Bolyai-Gerwien2}).
\begin{figure}
\begin{center}
\begin{tikzpicture}[scale = 1]
\filldraw[gray!10] (0,0) -- (0.7,3) -- (2.3,3) -- cycle;
\filldraw[gray!10] (6,0) -- (6.7,3) -- (8.3,3) -- cycle;
\filldraw[gray!35] (0,0) -- (2.3,3) -- (6.7,3) -- (6,0) -- cycle;
\draw[thick] (0,0) -- (6,0) -- (6.7,3) -- (0.7,3) -- (0,0) -- (2.3,3) -- (8.3,3) -- (6,0);
\filldraw (0,0) circle (0.5mm);
\filldraw (6,0) circle (0.5mm);
\filldraw (2.3,3) circle (0.5mm);
\filldraw (8.3,3) circle (0.5mm);
\filldraw (0.7,3) circle (0.5mm);
\filldraw (6.7,3) circle (0.5mm);
\end{tikzpicture}
\end{center}
\caption{Proof of Lemma~\ref{lemma_rectangular_parallelotopes}.}
\label{figure_dissection_Bolyai-Gerwien1}
\end{figure}
\begin{figure}
\begin{center}
\begin{tikzpicture}[scale = 1]
\filldraw[gray!10] (0,0) rectangle (8,3);
\filldraw[gray!10] (0,0) rectangle (6,4);
\filldraw[gray!35] (6,1) -- (6,3) -- (2,3) -- cycle;
\draw[thick] (0,0) rectangle (8,3);
\draw[thick] (0,0) rectangle (6,4);
\draw[thick,-] (8,0) -- (0,4);
\filldraw (0,0) circle (0.5mm);
\filldraw (6,0) circle (0.5mm);
\filldraw (8,0) circle (0.5mm);
\filldraw (0,3) circle (0.5mm);
\filldraw (0,4) circle (0.5mm);
\filldraw (6,4) circle (0.5mm);
\filldraw (8,3) circle (0.5mm);
\filldraw (6,1) circle (0.5mm);
\filldraw (2,3) circle (0.5mm);
\end{tikzpicture}
\end{center}
\caption{Proof of Lemma~\ref{lemma_rectangular_parallelotopes}.}
\label{figure_dissection_Bolyai-Gerwien2}
\end{figure}
As the boundaries of the polygons that we ignore when we are
talking about scissors congruence are in $\Zfep(V)$ when
$\dim(V)=2$, this gives the claim.

Suppose that $n\geq 3$. 
Let $(v_1,\ldots,v_n)$ be a basis of $V$, and let
$P=P(v_1,\ldots,v_n)$. By the claim for $n=2$ and
Lemma~\ref{lemma_product_decomposition}, there exists an
orthonormal basis $(e_1,e_2)$ of $\Span(v_1,v_2)$ and
$a\in\R_{\geq 0}$ such that $P\sim P(e_1,a\cdot e_2,v_3,\ldots,v_n)$.
Applying the $n=2$ case in $\Span(e_1,v_3)$ and
Lemma~\ref{lemma_product_decomposition}, we can find 
$v'_3\in\Span(e_1,v_3)$ orthogonal to $e_1$ such that
$P\sim P(e_1,a\cdot e_2,v'_3,v_4,\ldots,v_n)$. Now applying
the $n=2$ case in $\Span(e_2,v'_3)$, noting that this space
is orthogonal to $e_1$, and using Lemma~\ref{lemma_product_decomposition},
we can find a unit vector $e_3\in\Span(e_2,v'_3)$ that is orthogonal
to $e_1$ and $e_2$ and $b\in\R_{\geq 0}$ such that
$P\sim P(e_1,e_2,b\cdot e_3,v_4,\ldots,v_n)$. 
Continuing in this way, we finally obtain the claim.

We prove the last sentence of the lemma. If $P$ is a translate
of $P(v_1,\ldots,v_k)$ with $k\leq n-1$, then the classes of
$P$ and of $(0,1]^{n-1}\times(0,V_n(P)]$ are both in $K'_\Zf(V)$.
Suppose that $P$ is a translate of $P(v_1,\ldots,v_n)$, with
$(v_1,\ldots,v_n)$ a basis of $V$.
By the first assertion,
there exists an isometry $g$ of $V$ such that $g\cdot P\sim
[0,1]^{n-1}\times[0,V_n(P)]$, so the classes of $P$ and of $[0,1]^{n-1}\times
[0,V_n(P)]$ in $K_\Zf(V)$ are equal modulo $K'_\Zf(V)$. As the difference
between the classes of $[0,1]^{n-1}\times[0,V_n(P)]$ and
of $(0,1]^{n-1}\times(0,V_n(P)]$ is in $K'_\Zf(V)$, the result follows.
\end{proof}

\begin{lemma}
\begin{enumerate}
\item Let $H$ be a hyperplane of $V$. The inclusion $\Zf(H)\subset
\Zf(V)$ induces a morphism $K_\Zf(H)\fl K_\Zf(V)$ whose image
is $K'_\Zf(V)$.
\footnote{In fact, it follows from 
Theorem~\ref{thm_Bolyai-Gerwien_plus_epsilon} that $K_\Zf(H)\fl
K_\Zf(V)$ is injective, so we get an isomorphism from $K_\Zf(H)$ to $K'_\Zf(V)$.}

\item The subgroup $K'_\Zf(V)$ is the kernel of the morphism
$V_n:K_\Zf(V)\fl\R$.

\end{enumerate}
\label{lemma_flat_parallelotopes}
\end{lemma}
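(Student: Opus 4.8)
The plan is to treat the two assertions separately; (i) is essentially formal, while the content of the proof is in the second half of (ii). For (i), I would first construct the morphism. Since $H$ is a linear hyperplane, the inclusion $H\hookrightarrow V$ sends $\Pf(H)$ into $\Pf(V)$; because finite unions and intersections of convex polytopes agree whether formed in $H$ or in $V$, and because every affine isometry of $H$ extends to an affine isometry of $V$ (act as the identity on $H^\perp$), the three families of defining relations of $K_\Pf(H)$ go to relations in $K_\Pf(V)$, so we obtain a homomorphism $K_\Pf(H)\fl K_\Pf(V)$ sending the class of $C$ to the class of $C$ viewed in $V$. An element of $\Zf(H)$ is a translate of some $P(v_1,\ldots,v_r)$ with $r\leq\dim H=n-1$, hence lies in $\Zf'(V)$; therefore the homomorphism restricts to $K_\Zf(H)\fl K_\Zf(V)$ with image contained in $K'_\Zf(V)$. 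For the reverse inclusion, given a generator $[P]$ of $K'_\Zf(V)$ with $P=t+P(v_1,\ldots,v_r)$ and $r\leq n-1$, I would choose a linear isometry $g_0$ of $V$ with $g_0(\Span(v_1,\ldots,v_r))\subseteq H$ and then compose with the translation by the negative of the $H^\perp$-component of $g_0(t)$; the resulting affine isometry $g$ of $V$ satisfies $g(P)\in\Zf(H)$, so $[P]=[g(P)]$ lies in the image. Hence the image is exactly $K'_\Zf(V)$.

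For (ii), the inclusion $K'_\Zf(V)\subseteq\Ker(V_n)$ is immediate: every generator $[P]$ of $K'_\Zf(V)$ has $P$ contained in an affine subspace of dimension at most $n-1$, so $V_n(P)=0$. For the reverse inclusion I would use a linear isometry to reduce to $V=\R^n$, write $\xi\in\Ker(V_n)$ as $\xi=\sum_i n_i[P_i]$ with $P_i\in\Zf(V)$ and $n_i\in\Z$, and apply the last sentence of Lemma~\ref{lemma_rectangular_parallelotopes} to replace each $[P_i]$, modulo $K'_\Zf(V)$, by $f(t_i)$, where $t_i=V_n(P_i)\geq 0$ and $f(t):=[(0,1]^{n-1}\times(0,t]]\in K_\Zf(V)$; this is legitimate because half-open parallelotopes lie in $\Zfe(V)$, and it covers the lower-dimensional $P_i$ too, with $t_i=0$ and $f(0)=[\varnothing]=0$. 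Using that the class of a disjoint union of members of $\Pfe(V)$ is the sum of the classes (Remark~\ref{remark_[C]_for_more_general_C}) together with translation invariance of classes, the decomposition $(0,t+t']=(0,t]\sqcup(t,t+t']$ gives $f(t+t')=f(t)+f(t')$ for all $t,t'\geq 0$. Consequently $\sum_i n_i f(t_i)=f(s_+)-f(s_-)$ with $s_+=\sum_{n_i>0}n_i t_i$ and $s_-=\sum_{n_i<0}(-n_i)t_i$; since $V_n(\xi)=\sum_i n_i t_i=0$ forces $s_+=s_-$, we get $\sum_i n_i f(t_i)=0$, and hence $\xi\in K'_\Zf(V)$.

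I do not anticipate a genuine obstacle here, because Lemma~\ref{lemma_rectangular_parallelotopes} already performs the hard geometric work of normalizing an arbitrary parallelotope to a standard rectangular one modulo $K'_\Zf(V)$, so what remains is essentially bookkeeping. The points requiring care are: in (i), that the inclusion-induced map respects the isometry relations, which is precisely where one needs the extension of isometries of $H$ to $V$; and in (ii), that the manipulations with half-open parallelotopes take place in $K_\Zf(V)$ and not merely in $K_\Pf(V)$, which holds because $\Zfe(V)$ contains all half-open parallelotopes, together with a correct invocation of Remark~\ref{remark_[C]_for_more_general_C} and of the reduction to $V=\R^n$.
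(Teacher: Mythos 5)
Your proposal is correct and follows essentially the same route as the paper: for (i) the paper likewise observes that any translate of a $P(v_1,\ldots,v_k)$ with $k\leq n-1$ can be moved into $H$ by an affine isometry, and for (ii) it also reduces to $V=\R^n$, normalizes each $[P_i]$ to $[(0,1]^{n-1}\times(0,V_n(P_i)]]$ modulo $K'_\Zf(V)$ via Lemma~\ref{lemma_rectangular_parallelotopes}, and collapses the sum to $[(0,1]^{n-1}\times(0,V_+]]-[(0,1]^{n-1}\times(0,V_-]]$ with $V_+=V_-$. Your explicit verification of the additivity $f(t+t')=f(t)+f(t')$ and your handling of the lower-dimensional $P_i$ are details the paper leaves implicit, but the argument is the same.
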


\begin{proof}
\begin{enumerate}
\item The existence of the morphism $K_\Zf(H)\fl K_\Zf(V)$ is
clear, as well as the fact that its image is contained in
$K'_\Zf(V)$. Conversely, any translate of a $P(v_1,\ldots,v_k)$
with $k\leq n-1$ can be moved by an affine isometry to lie in
$H$, so its class is in the image of $K_\Zf(H)$.

\item We may assume that $V=\R^n$.
Any polytope in $\Zf'(V)$ has volume zero, so
$K'_\Zf(V)$ is included in the kernel of $V_n$. We prove the reverse
inclusion. Let $x$ be an element of $\Ker V_n$, and write
$x=\sum_{i=1}^r\alpha_i[P_i]$, with $\alpha_i\in\{\pm 1\}$ and
$P_i\in\Zf(V)$. 
We want to show that $x\in K'_\Zf(V)$. 
By Lemma~\ref{lemma_rectangular_parallelotopes}, for every $1 \leq i \leq r$,
the class of $P_i$ is equal to the class
of $(0,1]^{n-1}\times (0,V_n(P_i)]$ modulo $K'_\Zf(V)$.
So $x$ is equal modulo $K'_\Zf(V)$ to the sum
\[\sum_{i=1}^{r} \alpha_i[(0,1]^{n-1}\times (0,V_n(P_i)]]=
[(0,1)^{n-1}\times(0,V_+]]-[(0,1]^{n-1}\times(0,V_-]],\]
where $V_{\pm}=\sum_{1\leq i\leq r,\ \alpha_i=\pm 1}\Vol(P_i)$.
As $V_+-V_-=V_n(x)=0$ by assumption, we conclude that
$x\in K'_\Zf(V)$.
\qedhere
\end{enumerate}
\end{proof}

\begin{proof}[Proof of Theorem~\ref{thm_Bolyai-Gerwien_plus_epsilon}]
Let $V_*=(V_0,V_1,\ldots,V_n):K_\Zf(V)\fl\Z\times\R^n$.
Then the morphism $V_*$ sends the class of a point
to $(1,0,\ldots,0)$, so its image contains the factor~$\Z$.
Denote by $(e_1,\ldots,e_n)$ the canonical basis of $\R^n$.
If $i\in\{1,\ldots,n\}$ and
$a\in\R_{\geq 0}$, then 
by Lemma~\ref{lemma_intrinsic_volume_half_open_parallelotope}
$V_*$ sends the class of the half-open rectangular
parallelotope $(0,1]^{i-1}\times(0,a]\times\{0\}^{n-i}$ to
$(0,a\cdot e_i)\in\Z\times\R^n$,
so the image of~$V_*$ contains $\R\cdot e_i$.
This shows that $V_*$ is surjective.

We now prove the injectivity of $V_*$
by induction on $\dim(V)$. If $\dim(V)=0$, the
result is clear. Suppose that $\dim(V)>0$ and that we know the
result for spaces of smaller dimension. Let $x\in K_\Zf(V)$ such
that $V_i(x)=0$ for $0\leq i\leq n$. In particular, we have
$V_n(x)=0$, so $x\in K'_\Zf(V)$ by Lemma~\ref{lemma_flat_parallelotopes}(ii).
Let $H$ be a hyperplane of $V$. Then $x$ is in the image of
the morphism $K_\Zf(H)\fl K_\Zf(V)$ 
by Lemma~\ref{lemma_flat_parallelotopes}(ii); choose a preimage
$y\in K_\Zf(H)$ of $x$. If $0\leq i\leq n-1$, then we have
$V_i(y)=V_i(x)$ because intrinsic volumes do not depend on
the dimension of the ambient space (see the top of page~214
of~\cite{Schneider}), so $V_i(y)=0$. It follows from the
induction hypothesis that $y=0$, and we conclude that $x=0$.
\end{proof}

\section*{Acknowledgements}

The authors thank Dominik Schmid
for introducing them to the Pizza Theorem, and Ramon van Handel
for dispelling some of their misconceptions and lending them
a copy of \cite{Schneider}.
They made extensive use of Geogebra to understand the $2$-dimensional
situation and to produce some of the figures.
This work was partially 
supported by the LABEX MILYON (ANR-10-LABX-0070) of Universit\'e 
de Lyon, within the program ``Investissements d'Avenir'' (ANR-11-IDEX-0007)
operated by the French National Research Agency (ANR),
and by Princeton University.
This work was also partially supported by grants from the
Simons Foundation
(\#429370 and \#854548 to Richard~Ehrenborg
and \#422467 to Margaret~Readdy). 
The third author was also supported by
NSF grant DMS-2247382.

\printbibliography

\end{document}